\newtheorem{theorem}{Theorem}
\newtheorem{exam}[theorem]{\bf Example}
\newtheorem{claim}[theorem]{Claim}
\newtheorem{lemma}[theorem]{Lemma}
\newtheorem{proposition}[theorem]{Proposition}
\newtheorem{remark}[theorem]{Remark}
\newenvironment{proof}[1][Proof]{\textbf{#1.} }{\ \rule{0.5em}{0.5em}}
\begin{document}
\author {Martial Longla\\ University of Mississippi,\\University, MS, USA.\\ longla$_{-}$m$_{-}$martial@yahoo.com}
\title{On Dependence Structure of Copula-based Markov chains}
\maketitle

\abstract{We consider dependence coefficients for stationary Markov chains. We emphasize on some equivalencies for reversible Markov chains. We improve some known results and provide a necessary condition for Markov chains based on Archimedean copulas to be exponential $\rho$-mixing. We analyse the example of the Mardia and Frechet copula families using small sets.}

\bigskip 
Key words: Markov chains, copula, mixing, reversible processes, ergodicity, small sets.

\bigskip
AMS 2000 Subject Classification: Primary 60J20, 60J35, 37A30.

\section{Introduction}
This work is motivated by applications in Bayesian analysis of Monte Carlo Markov chains. Longla and Peligrad (2012), Longla (2013) have provided several theorems on exponential $\rho$-mixing and geometric ergodicity of convex combinations of geometrically ergodic Markov chains. This work completes the ideas provided in the two cited papers, that one can read for more information on copulas and their importance in assessing the dependence structure of Markov chains.

\subsection{Notations}
In this paper we use the following notations: 
$\displaystyle L^{p}(0,1)=\big\{g(x): \int_{0}^{1}|g|^{p}(x)dx<\infty\big\},$

 $\displaystyle L^{p}_{0}(0,1)=\big\{g(x): \int_{0}^{1}|g|^{p}(x)dx<\infty, \int_{0}^{1}g(x)dx=0\big\},$  $\displaystyle ||g||_{p}=\Big(\int_{0}^{1}|g|^{p}(x)dx\Big)^{1/p}.$ 

 For $i=1,2$, $\displaystyle A_{,i}(x_{1},x_{2})=\frac{\partial A(x_{1},x_{2})}{\partial x_{i}}$ and $c(x,y)=C_{,12}(x,y)$ is called density of the copula $C(x,y)$. $\mathcal{R}$ is the Borel $\sigma$-algebra. $A^{c}$ is the complement of $A$. $\mu$ is the Lebesgue measure on $[0,1]$. $I$ stands for the interval $[0,1]$ and  $[x]$ is the integer part of the number $x$. $\mathbb{R}^{+}$ is the set of positive real numbers. $\mathbb{N}$ is the set of natural numbers.

\subsection{Definitions}
 A 2-copula is a bivariate function $C:[0,1]\times [0, 1] \rightarrow [0, 1]=I$, such that $C(0, x)=C(x, 0)=0$ (meaning that C is grounded), $C(1, x)= C(x, 1) = x $  for all $x\in[0,1]$ ( meaning that each coordinate is uniform on $I$),  for all $[x_{1}, x_{2}]\times [y_{1}, y_{2}]\subset I^{2}$, $C(x_{1}, y_{1})+C(x_{2}, y_{2})-C(x_{1}, y_{2})-C(x_{2}, y_{1})\geq 0.$ Therefore, any convex combination of 2-copulas is a 2-copula. The increased interest in the theory of copulas and its application is due to the following fact. If $X_{1}, X_{2}$ are random variables with joint distribution $F$ and marginal distributions $F_{1},F_{2} $, then the function $C$ defined via $C(F_{1}(x_{1}), F_{2}(x_{2}))=F(x_{1}, x_{2})$ is a copula (this is the Sklar's theorem). 

\noindent $\displaystyle A*B(x,y)=\int_{0}^{1}A_{,2}(x,t)B_{,1}(t,y)dt$ is a copula, fold product of the copulas $A(x,y)$ and $B(x,y)$.
 
\noindent Some popular examples of copulas are: the Hoeffding upper bound $M(u,v)=\min(u,v)$, the Hoeffding lower bound $W(u,v)=\max{(u+v-1,0)}$ and the independence copula $P(u,v)=uv$. Any copula has its graph between the graphs of $W$ and $M$. $P$ is the copula associated to two independent random variables. Another popular class of copulas is the Archimedean family of copulas. Given a decreasing concave up (convex) function $\varphi: [0,1]\rightarrow [0,\infty)$ such that $\varphi(1)=0$.

 If $\varphi(0)=\infty$, then  $\varphi$ is called a strict generator or generator of the strict Archimedean copula 

$\displaystyle
C(u,v)=\varphi^{-1}(\varphi(u)+\varphi(v)) , \quad\mbox{with}  \quad c(u,v)=-\frac{\varphi^{''}{ o}\varphi^{-1}(\varphi(u)+\varphi(v))\varphi^{'}(u)\varphi^{'}(v)}{\Big(\varphi^{'}{ o}\varphi^{-1}(\varphi(u)+\varphi(v))\Big)^{3}}.
$

If $\varphi(0)<\infty$, then $\varphi$ is non-strict generator or generator of the non-strict Archimedean copula 
$\displaystyle
C(u,v)=\varphi^{-1}(\min{(\varphi(u)+\varphi(v),\varphi(0))}).
$
 A non-strict generator can always be standardized. The standard generator satisfies $\varphi(0)=1$. Thus, all generators of the same Archimedean copula are scalar multiples of the standard generator. So, without loss of generality we can state all results in terms of the standard generator. A stationary Markov chain can be defined by a copula and a one dimensional marginal distribution. In this set-up, we call it a copula-based Markov chain. 
For stationary Markov chains with uniform marginals, the transition probability for sets $A=(-\infty, y]$ is $P(x,A)=C_{,1}(x,y)$. See Longla and Peligrad (2012) for more details on this topic.  

\subsection{Mixing coefficients}
 Given $\sigma$-fields $\mathscr{A},\mathscr{B}$, the mixing coefficients of interest in this paper are defined as follows.

\noindent $\displaystyle \beta(\mathscr{A},\mathscr{B})=\mathbb{E}\sup_{B\in \mathscr{B}}|P(B|\mathscr{A})-P(B)|,$  \quad
$\displaystyle \rho(\mathscr{A},\mathscr{B})=\sup_{f\in \mathbb{L}^{2}(\mathscr{A}),g\in \mathbb{L}^{2}(\mathscr{B})}corr(f,g),$ 
and

\noindent 
$\displaystyle \phi(\mathscr{A},\mathscr{B})=\sup_{B\in \mathscr{B}, A\in \mathscr{A}, P(A)>0}|P(B|A)-P(B)|.$
Using the transition probabilities for a Markov chain generated by an absolutely continuous copula and a marginal distribution with strictly positive density, for  $\mathscr{A}=\sigma(X_{i}, i\leq 0)$,$\mathscr{B}=\sigma(X_{i}, i\geq n)$, it was shown by Longla and Peligrad (2012) that 
$$ \beta(\mathscr{A}, \mathscr{B})=\beta_{n}=\int_{0}^{1}\sup_{B\in\mathcal{R}\cap I}|\int_{B}(c_{n}(x,y)-1)dy|dx,$$
$$\rho(\mathscr{A}, \mathscr{B})=\rho_{n}=\sup_{f,g}\big\{\int^{1}_{0}\int^{1}_{0}c_{n}(x,y)f(x)g(y)dxdy : \quad ||g||_{2}=||f||_{2}=1, \quad \mathbb{E}(f)=\mathbb{E}(g)=0\big\},$$
$\displaystyle \phi(\mathscr{A}, \mathscr{B})=\phi_{n}=\sup_{B\in \mathcal{R}\cap I}ess\sup_{x}|\int_{B}(c_{n}(x,y)-1)dy|,$
where, $c_n$ is the density of $(X_{0}, X_{n})$.  

A stochastic process is $\rho$-mixing, if $\rho_{n}\rightarrow 0$. The process is exponentially mixing, if the convergence rate is exponential. A stochastic process is geometrically ergodic, if $\beta_{n}$ converges to $0$ exponentially fast.  A stationary sequence is absolutely regular, if
$\beta_{n}\rightarrow0$ as $n\rightarrow\infty.$ It is well known (see for
instance Corollary 21.7 in Bradley  vol. 2 (2007)) that a strictly stationary Markov chain is absolutely regular (i.e. $\beta_{n}\rightarrow0$), if and only if it is irreducible (i.e. Harris recurrent) and aperiodic. A stationary Markov chain is irreducible if there exists a set $B$, such that $\pi(B)=1$ and the following holds: for all $x\in B$ and every set $A\in\mathcal{R}$ such that $\pi(A)>0$, there is a positive integer $n=n(x,A)$ for which $P^{n}(x,A)>0.$ An irreducible stationary Markov chain is aperiodic if and only if there is $A$ with $\pi(A)>0$, $n>0$, such that $P^{n}(x,A)>0$ and $P^{n+1}(x,A)>0$ for all $x$ $\in A$. Here $\pi$ is the invariant distribution. See  Theorem $3.3.1$ of Chan and Tong (2001) for more. Let $(Y_{n}, n\in\mathbb{N})$ be an irreducible and aperiodic Markov chain with a transition measure $P^{n}(x, A) = P(X_{n}\in A|X_{0} = x)$, $n \geq 1$. S is a small set, if it is nonnull and for some $n>0$, $q >0$ and a probability measure $\nu$, such that $P^{n}(x, A) \geq q\nu(A)$ for all $x \in S$ and measurable $A$ .

This paper is structured as follows. In Section 2 we provide new results on exponential $\rho$-mixing and exponential $\beta$-mixing for some families of copulas. For instance, Lemma \ref{Lem1} and Theorem \ref{Theo1} deal with mixing rates of copula-based Markov chains with square-integrable copula densities. Theorem \ref{theo5} provides a new bound on $\rho_{1}$ and relates it to our previous results. Theorem \ref{Theo9} generalizes the result of Theorem \ref{Theo2}. Theorem \ref{Theo3} is about mixing rates of Markov chains generated by non-strict Archimedean copulas. In Proposition \ref{cor2} we study convex combinations of copulas. In Theorem \ref{MHMixing} we provide the mixing rates of a popular kernel, and the subsequent Lemma \ref{MHcopula} exhibits a new family of copulas and the mixing rate of the Markov chains they generate. Families of copulas that generate $\rho$-mixing and $\phi$-mixing are constructed. We also apply the theory of small sets to the Frechet and Mardia copula families. We show, that Markov chains generated by these families of copulas are $\phi$-mixing, thus geometric $\beta$-mixing and $\rho$-mixing. In Section 3 we provide the proofs.
\section{Mixing rates of copula-based Markov chains}
 A copula-based Markov chain is the representation of a stationary Markov chain by the copula of its consecutive states and an invariant distribution. 
\subsection{General condition for exponential $\rho$-mixing}
Define the linear operator $T: \mathbb{L}^{2}_{0}(0,1)\to \mathbb{L}^{2}(0,1)$ by \begin{eqnarray}
T(f)(x)=\int_{0}^{1}{f(y)c(x,y)dy}. \label{Tf}
\end{eqnarray}
It is well known that $\displaystyle \rho_{1}=\sup_{f\in\mathbb{L}^{2}_{0}(0,1)}\frac{||Tf||_{2}}{||f||_{2}}$. See Longla and Peligrad (2012) for references. Based on this fact, we derive the following.
\begin{lemma}\label{Lem1}
For a stationary Markov chain generated by a symmetric copula with square-integrable density, $\rho_{k}=\lambda_{1}^{k}$. $\rho$-mixing is equivalent to $\lambda_{1}<1$, where $\lambda_{1}$ is the largest eigen-value of  $T$. 
\end{lemma}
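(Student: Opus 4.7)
My plan is to reduce the claim to the spectral theorem for compact self-adjoint operators applied to $T$ restricted to $L^{2}_{0}(0,1)$. First, symmetry of the copula means $c(x,y)=c(y,x)$ almost everywhere, so the integral kernel of $T$ is symmetric; this is precisely reversibility of the chain with respect to its uniform invariant distribution. Square-integrability of $c$ makes $T$ a Hilbert--Schmidt operator, hence compact, and symmetry of the kernel makes it self-adjoint on $L^{2}(0,1)$. I would then check that $L^{2}_{0}(0,1)$ is $T$-invariant, using Fubini and the uniformity of the marginals of any copula:
$$\int_{0}^{1} Tf(x)\,dx = \int_{0}^{1} f(y)\Big(\int_{0}^{1} c(x,y)\,dx\Big)\,dy = \int_{0}^{1} f(y)\,dy = 0.$$

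Consequently $T|_{L^{2}_{0}}$ is compact and self-adjoint, and by the spectral theorem it admits an orthonormal eigenbasis of $L^{2}_{0}$ with real eigenvalues $|\lambda_{1}|\geq|\lambda_{2}|\geq\cdots\to 0$, and operator norm $\|T\|_{L^{2}_{0}}=|\lambda_{1}|$. The formula recalled just above the lemma says $\rho_{1}$ equals this operator norm; the same identity applied at horizon $k$ gives $\rho_{k}=\|T_{k}\|_{L^{2}_{0}}$, where $T_{k}$ denotes the integral operator with kernel $c_{k}$. The Chapman--Kolmogorov equation together with uniform marginals yields $c_{k}(x,y)=\int_{0}^{1}c_{k-1}(x,z)c(z,y)\,dz$, so $T_{k}=T^{k}$. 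Self-adjointness of $T|_{L^{2}_{0}}$ gives the classical identity $\|T^{k}\|=\|T\|^{k}$, hence $\rho_{k}=|\lambda_{1}|^{k}$, which I write as $\lambda_{1}^{k}$ following the convention of the statement. The equivalence $\rho_{n}\to 0\Longleftrightarrow|\lambda_{1}|<1$ is then immediate.

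The only substantive step is the identification $c_{k}\leftrightarrow T^{k}$: one must verify from Chapman--Kolmogorov and the uniform invariant distribution that iterated fold products of $c$ produce the $k$-step density, so that the operator induced by $c_{k}$ is the $k$-th iterate of $T$. Once that is in place, everything else is a routine application of the spectral theorem for Hilbert--Schmidt integral operators with symmetric kernel, plus the standard equality $\|T^{k}\|=\|T\|^{k}$ for bounded self-adjoint operators, so I expect no serious obstruction.
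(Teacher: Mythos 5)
Your proposal is correct and follows essentially the same route as the paper: both arguments rest on the fact that a symmetric, square-integrable kernel makes $T$ a compact self-adjoint (Hilbert--Schmidt) operator, identify $\rho_k$ with the norm of the operator with kernel $c_k=c^{k}$ on $L^{2}_{0}(0,1)$, and conclude via the spectral theorem. The only real difference is cosmetic: you invoke $\|T^{k}\|=\|T\|^{k}$ for self-adjoint operators where the paper sandwiches $\rho_k$ between $\lambda_1^{k}$ (obtained as the correlation of the first pair of eigenfunctions) and $\rho_1^{k}$, and your use of $|\lambda_1|$ is in fact slightly more careful, since symmetry alone does not force the eigenvalues to be positive as the paper tacitly assumes.
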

Combining this result with Theorem 4 of Longla and Peligrad (2012) leads to the following theorem.
\begin{theorem} \label{Theo1}
A stationary Markov chain generated by a symmetric copula with square integrable density is geometric $\beta$-mixing if and only if it is geometric $\rho$-mixing. 
\end{theorem}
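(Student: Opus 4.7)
The plan is to chain together two equivalences via Lemma \ref{Lem1}. Lemma \ref{Lem1} reduces geometric $\rho$-mixing to the single spectral condition $\lambda_{1}<1$: for a symmetric copula with square-integrable density the operator $T$ is self-adjoint on $\mathbb{L}^{2}_{0}(0,1)$ and Hilbert--Schmidt, so $\rho_{k}=\lambda_{1}^{k}$. Thus $\rho_{n}\to 0$ (at any rate), geometric $\rho$-mixing, and $\lambda_{1}<1$ are all the same condition, and the problem reduces to showing that geometric $\beta$-mixing is equivalent to $\lambda_{1}<1$.

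For the direction ``geometric $\rho$-mixing implies geometric $\beta$-mixing,'' I would invoke Theorem 4 of Longla and Peligrad (2012), which in the present copula setup delivers a comparison of the form $\beta_{n}\le \text{const}\cdot \rho_{n}$ (or a similar quantitative bound) for Markov chains with square-integrable densities; combining with $\rho_{k}=\lambda_{1}^{k}$ from Lemma \ref{Lem1} gives the exponential decay of $\beta_{n}$ with rate $\lambda_{1}$.

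For the reverse direction, I would use reversibility. Since the copula is symmetric, the chain is reversible and $T$ is self-adjoint, so the $L^{2}_{0}$-operator norm $\|T^{n}\|$ equals $\lambda_{1}^{n}$. Geometric $\beta$-mixing implies geometric ergodicity, and for reversible Markov chains this is equivalent to the existence of a strictly positive $L^{2}$ spectral gap of $T$ (the Roberts--Rosenthal--Tweedie equivalence that Theorem 4 of Longla and Peligrad (2012) incorporates into the copula framework). Hence $\lambda_{1}<1$, which by Lemma \ref{Lem1} is geometric $\rho$-mixing.

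The step I expect to be the main obstacle is exactly this ``geometric $\beta$-mixing $\Rightarrow$ spectral gap'' implication: it genuinely requires the symmetry of $c(x,y)$ (so that $T$ is self-adjoint and the operator norm coincides with the top eigenvalue $\lambda_{1}$), and it is where Theorem 4 of Longla and Peligrad (2012) does the heavy lifting by translating the decay of $\beta_{n}$ into operator-norm decay of $T^{n}$ on $\mathbb{L}^{2}_{0}(0,1)$. Once that transfer is in hand, Lemma \ref{Lem1} identifies this operator norm with $\lambda_{1}^{n}$ and both implications close simultaneously.
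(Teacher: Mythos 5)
Your overall architecture (reduce everything to $\lambda_{1}<1$ via Lemma \ref{Lem1}, then connect $\beta$-mixing to the spectral gap) is sound, and your treatment of the direction ``geometric $\beta$-mixing $\Rightarrow$ geometric $\rho$-mixing'' matches the paper: geometric $\beta$-mixing is geometric ergodicity by definition, reversibility (symmetry of the copula) turns this into an $L^{2}$ spectral gap, i.e.\ $\lambda_{1}<1$, and Lemma \ref{Lem1} then gives $\rho_{k}=\lambda_{1}^{k}$.

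The gap is in the other direction. You invoke Theorem 4 of Longla and Peligrad (2012) as if it supplied a comparison $\beta_{n}\le \mathrm{const}\cdot\rho_{n}$ for chains with square-integrable density. That is not what it provides: as it is used throughout this paper, that theorem passes from $\rho$-mixing to geometric ergodicity only under the extra hypothesis that the density is strictly positive on a set of Lebesgue measure $1$, a hypothesis absent from Theorem \ref{Theo1}; and without some such hypothesis $\rho$-mixing does not imply $\beta$-mixing at all, so no generic inequality $\beta_{n}\le C\rho_{n}$ can be the engine here. The paper closes this direction by a direct computation that your proposal omits: using the spectral representation $c^{k}(u,v)=1+\sum_{i\ge1}\lambda_{i}^{k}\varphi_{i}(u)\varphi_{i}(v)$ from the proof of Lemma \ref{Lem1}, one bounds
$$\beta_{k}=\int_{0}^{1}\sup_{B}\Big|\int_{B}\big(c^{k}(u,v)-1\big)du\Big|dv\le\sum_{i\ge1}\lambda_{i}^{k}\int_{0}^{1}|\varphi_{i}(v)|dv\,\sup_{B}\int_{B}|\varphi_{i}(u)|du\le\sum_{i\ge1}\lambda_{i}^{k}\le M\lambda_{1}^{k},$$
with $M=\big(\sum_{i\ge1}\lambda_{i}^{2}\big)/\lambda_{1}^{2}<\infty$ precisely because the square-integrable density makes $T$ Hilbert--Schmidt. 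It is this use of square integrability, not any positivity of the density, that drives the implication; you need to supply this estimate (or cite Beare (2010) for it, as the paper's remark does) to complete the proof.
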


 Beare (2010) has shown that geometric ergodicity follows from $\rho$-mixing and the reverse implication uses the comment before Theorem 4 in Longla and Peligrad (2012).

\begin{theorem} \label{theo5}  {
 
\begin{eqnarray}
 \mbox{Let}\quad c(x,1)-c(x,0)\in \mathbb{L}^{2}(0,1) \label{cond} \quad \mbox{and}\quad
\int_{0}^{1}|c_{y}(x,y)|dy \in \mathbb{L}^{2}(0,1).\\
\mbox{Define} \quad ||\int_{0}^{1}|c_{y}(x,y)|dy||^{2}_{2}=k_{1} \quad \mbox{and} \quad || |c(x,1)-c(x,0)|+\int_{0}^{1}|c_{y}(x,y)|dy||^{2}_{2}=k_{2}.
\end{eqnarray}
If $k_{1}+k_{2}<12$, then the stationary Markov chain generated by $C$ is an exponential $\rho$-mixing ($\rho_{1}\le \sqrt{(k_{1}+k_{2})/12}<1$). Moreover, if the density is strictly positive on a set of Lebesgue measure 1, then it is geometrically ergodic.
}
\end{theorem}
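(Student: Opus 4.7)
The plan is to estimate $\rho_1$ using the variational formula $\rho_1 = \sup_{f \in \mathbb{L}^2_0(0,1)} \|Tf\|_2/\|f\|_2$ recalled just before the theorem, and to extract the constant $1/12$ from the identity $\|y - 1/2\|^2_2 = 1/12$ via two applications of Cauchy--Schwarz.

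Setting $F(y) = \int_0^y f(s)\,ds$ for $f \in \mathbb{L}^2_0(0,1)$, one has $F(0) = F(1) = 0$, and integration by parts yields $Tf(x) = -\int_0^1 F(y)\, c_y(x,y)\,dy$. Decomposing $F = \bar F + \tilde F$ with $\bar F = \int_0^1 F$ constant and $\int_0^1 \tilde F = 0$, and using $\int_0^1 c_y(x,y)\,dy = c(x,1) - c(x,0) =: A(x)$, I would rewrite
\[
Tf(x) = -\bar F\, A(x) - \int_0^1 \tilde F(y)\, c_y(x,y)\,dy.
\]
Integrating by parts a second time and using $\int f = 0$ gives $\bar F = -\int_0^1 (y - 1/2) f(y)\,dy$, whence Cauchy--Schwarz produces $|\bar F| \le \|f\|_2/\sqrt{12}$.

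The main analytic step is the sharp bound $\|\tilde F\|_\infty \le \|f\|_2/\sqrt{12}$. I would derive it from the representation
\[
\tilde F(y_0) = \int_0^1 f(s)\bigl[\mathbf{1}_{\{s \le y_0\}} + s - 1/2 - y_0\bigr]\,ds,
\]
valid for every $y_0 \in [0,1]$ after using $\int f = 0$, together with the direct computation that the $\mathbb{L}^2(ds)$-norm of the bracketed kernel equals $\sqrt{1/12}$ independently of $y_0$ (the $y_0$-dependent contributions from $\int u^2$, $2\int uv$, and $\int v^2$ cancel). Cauchy--Schwarz then delivers the $L^\infty$ bound. Combining the two estimates,
\[
|Tf(x)| \le |\bar F|\,|A(x)| + \|\tilde F\|_\infty \int_0^1 |c_y(x,y)|\,dy \le \frac{\|f\|_2}{\sqrt{12}}\Bigl(|c(x,1)-c(x,0)| + \int_0^1 |c_y(x,y)|\,dy\Bigr).
\]
Squaring and integrating in $x$ gives $\|Tf\|_2^2 \le (\|f\|_2^2/12)\,k_2 \le (\|f\|_2^2/12)(k_1 + k_2)$; hence $\rho_1 \le \sqrt{(k_1+k_2)/12} < 1$. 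Exponential $\rho$-mixing then follows from the submultiplicativity $\rho_n \le \rho_1^n$ for stationary Markov chains. Under the strict positivity assumption the chain is irreducible and aperiodic, so combining geometric $\rho$-mixing with the result of Beare (2010) cited in the text yields geometric ergodicity.

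The main obstacle is verifying the uniform identity $\|\mathbf{1}_{\{s \le y_0\}} + s - 1/2 - y_0\|_{\mathbb{L}^2(ds)}^2 = 1/12$ for all $y_0 \in [0,1]$; this exact cancellation is what synchronizes the two Cauchy--Schwarz estimates, letting them combine into a single factor $1/\sqrt{12}$ multiplying the integrand $|c(x,1)-c(x,0)| + \int_0^1 |c_y(x,y)|dy$ whose squared $L^2$-norm is $k_2$.
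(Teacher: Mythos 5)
Your proof is correct, and it takes a genuinely different route from the paper. The paper bounds $\rho_1=\|T\|$ via the inequality $\|T\|^2\le\sum_n\|Te_n\|_2^2$ applied to the trigonometric basis $\{\sqrt2\sin(2n\pi x),\sqrt2\cos(2n\pi x)\}$ of $\mathbb{L}^2_0(0,1)$: one integration by parts per basis element produces the factor $1/(2n\pi)$, the cosine terms contribute $k_1/(2n^2\pi^2)$, the sine terms (which pick up the boundary term $c(x,1)-c(x,0)$) contribute $k_2/(2n^2\pi^2)$, and the constant $1/12$ emerges from $\sum n^{-2}=\pi^2/6$. You instead integrate by parts once against the primitive $F(y)=\int_0^y f$, split $F$ into its mean $\bar F$ and the centered part $\tilde F$, and obtain $1/12$ twice from exact $\mathbb{L}^2$ computations: $\|y-1/2\|_2^2=1/12$ for $\bar F$, and the uniform identity $\|\mathbf{1}_{\{s\le y_0\}}+s-1/2-y_0\|_{\mathbb{L}^2(ds)}^2=1/12$ for $\|\tilde F\|_\infty$ (I checked this identity: the $y_0$-dependent terms $y_0$, $-y_0-y_0^2$, and $y_0^2$ indeed cancel). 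Your argument is more elementary, avoiding the basis expansion and the Bessel-type claim, and it actually yields the \emph{sharper} bound $\rho_1\le\sqrt{k_2/12}$, since the whole estimate is absorbed into the single integrand $|c(x,1)-c(x,0)|+\int_0^1|c_y(x,y)|\,dy$ whose squared norm is $k_2$; the stated bound $\sqrt{(k_1+k_2)/12}$ then follows a fortiori because $k_1\ge0$. What the paper's approach buys in exchange is a reusable template (estimate $\|Te_n\|_2$ basis element by basis element) that separates the two sources of decay, which is why its final constant is the cruder $k_1+k_2$. The concluding steps (exponential $\rho$-mixing from $\rho_n\le\rho_1^n$, and geometric ergodicity under strict positivity of the density) match the paper's, up to the attribution: the paper invokes Theorem 4 of Longla and Peligrad (2012) for the last implication rather than Beare (2010), but this does not affect the validity of your argument.
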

\begin{exam}{The Farlie-Gumbel-Morgenstern family of copulas.}
The Farlie-Gumbel-Morgenstern copula family defined by $\displaystyle C(x,y)=xy+\theta xy(1-x)(1-y)$, $\theta\in[-1,1]$, generates exponential $\rho$-mixing and exponential $\beta$-mixing for all values of its parameter. 
\end{exam}
For this family, $\displaystyle c(x,y)=1+\theta(1-2x)(1-2y), \hskip2mm c_{y}(x,y)=-2\theta(1-2x)$. All assumptions of Theorem \ref{theo5} are satisfied. 
$\displaystyle c(x,1)-c(x,0)=-2\theta(1-2x), \quad \int_{0}^{1}|c_{y}(x,y)|dy=2|\theta(1-2x)| $ and
$\displaystyle |c(x,1)-c(x,0)|+\int_{0}^{1}|c_{y}(x,y)|dy=4|\theta(1-2x)|.$
Therefore, $k_{1}=4\theta^2/3$ and $k_{2}=16\theta^2/3$.

\noindent $k_{1}+k_{2}<12$ if $\theta^{2} <9/5$. This is true for all $\theta \in [-1,1].$

Beare  (2010) proved that for a copula with density bounded away from zero we have exponential $\rho$-mixing. These conditions imply $\phi$-mixing as shown by Longla and Peligrad. These assumptions were relaxed by Longla (2013) as follows.

\begin{theorem} \label{Theo2}
If there exists nonnegative functions $\varepsilon_{1},$ $\varepsilon_{2}$ defined on $[0,1]$, for which the density of the absolute continuous part of the copula satisfies the inequality $\displaystyle c(x,y)\geq \varepsilon_{1}(x)+\varepsilon_{2}(y),$ with $\varepsilon_{1}$, $\varepsilon_{2} \in \mathbb{L}^{1}[0,1]$, such that at least one of the two functions has a non-zero integral, then the Markov chains generated by this copula are exponential $\rho$-mixing. Moreover, if the density is strictly positive on a set of Lebesgue measure 1, then these Markov chains are geometrically ergodic.
\end{theorem}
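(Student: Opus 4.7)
The strategy is to control the operator norm of $T$ on $\mathbb{L}^{2}_{0}(0,1)$, since $\rho_{1}=\|T\|$ and $\rho_{n}=\|T^{n}\|\le \|T\|^{n}$, so exponential $\rho$-mixing will follow at once from $\rho_{1}<1$. First I would decompose the density as $c(x,y)=\varepsilon_{1}(x)+\varepsilon_{2}(y)+r(x,y)$ with $r\ge 0$, and set $a=\int_{0}^{1}\varepsilon_{1}$, $b=\int_{0}^{1}\varepsilon_{2}$. Integrating the minorization $c\ge \varepsilon_{1}+\varepsilon_{2}$ against each marginal of $c$ gives $\varepsilon_{1}\le 1-b$ a.e., $\varepsilon_{2}\le 1-a$ a.e., $a+b\le 1$, together with the useful formulas $\int r(x,y)\,dy=1-\varepsilon_{1}(x)-b$ and $\int r(x,y)\,dx=1-a-\varepsilon_{2}(y)$.

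For $f\in \mathbb{L}^{2}_{0}(0,1)$, the hypothesis $\int f=0$ kills the $\varepsilon_{1}(x)$ term inside $Tf$ and produces $Tf(x)=C_{f}+R_{f}(x)$ with $C_{f}=\int f\,\varepsilon_{2}$ and $R_{f}(x)=\int f(y)\,r(x,y)\,dy$. Using the other marginal, $\int Tf\,dx=\int f=0$, I obtain $\int R_{f}=-C_{f}$. Substituting this into the expansion of $\|Tf\|_{2}^{2}=\int(C_{f}+R_{f})^{2}$ yields the key cancellation $\|Tf\|_{2}^{2}=\int R_{f}^{2}-C_{f}^{2}\le \int R_{f}^{2}$. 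If I used only the $y$-marginal identity, I would be left with an uncontrolled additive term of size $\|\varepsilon_{2}\|_{2}\|f\|_{2}$; invoking the second mean-zero identity is what removes it.

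To bound $\int R_{f}^{2}$, a Cauchy--Schwarz in $y$ against the positive kernel $r(x,\cdot)$ gives $R_{f}(x)^{2}\le (1-\varepsilon_{1}(x)-b)\int f^{2}(y)\,r(x,y)\,dy$. Dropping $\varepsilon_{1}(x)\ge 0$, integrating in $x$ and applying Fubini with $\int r(x,y)\,dx=1-a-\varepsilon_{2}(y)\le 1-a$ delivers $\|Tf\|_{2}^{2}\le (1-a)(1-b)\|f\|_{2}^{2}$. Thus $\rho_{1}\le \sqrt{(1-a)(1-b)}$, and since $a,b\in[0,1]$ with $a+b\le 1$ and at least one of them strictly positive by hypothesis, this is strictly less than $1$. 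Exponential $\rho$-mixing follows. For the second assertion, strict positivity of $c$ on a set of full measure forces irreducibility and aperiodicity, so the comment after Theorem \ref{Theo1} citing Beare (2010) upgrades $\rho$-mixing to geometric ergodicity.

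The hard part is not the Cauchy--Schwarz step, which is routine once the decomposition is in place, but producing the \emph{symmetric} bound $(1-a)(1-b)$ rather than an asymmetric one like $(1-b)^{2}$ that would degenerate when $b=0$. This symmetry is what lets a positive integral of \emph{either} $\varepsilon_{1}$ or $\varepsilon_{2}$ alone suffice, and it hinges on using both marginal identities $\int c(x,y)\,dy=\int c(x,y)\,dx=1$ at complementary points of the argument.
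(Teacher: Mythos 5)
Your argument is a genuinely different route from the paper's, and on absolutely continuous copulas it is correct and even sharper. The paper never analyzes the operator $T$: it integrates the polarization identity $2f(x)g(y)=f^{2}(x)+g^{2}(y)-(f(x)-g(y))^{2}$ against the full copula measure $C(dx,dy)$, lower-bounds $\int_{I^2}(f(x)-g(y))^{2}C(dx,dy)$ by $\int_{I^2}(f(x)-g(y))^{2}(\varepsilon_{1}(x)+\varepsilon_{2}(y))\,dx\,dy=I_a+I_b\ge a+b$, and reads off $\rho_{1}\le 1-\frac{1}{2}(a+b)$ directly from the correlation. Your double use of the marginal identities plus Cauchy--Schwarz against the residual kernel buys the better constant $\sqrt{(1-a)(1-b)}\le 1-\frac{1}{2}(a+b)$, and like the paper it needs only $a+b>0$.

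There is, however, a genuine gap: the hypothesis bounds only the density $c$ of the \emph{absolutely continuous part} of the copula, and the theorem is invoked later in the paper precisely for copulas with nontrivial singular parts (the Frechet and Mardia families, see the remark after Theorem \ref{theo4}). Your proof silently assumes the copula is absolutely continuous in two places: the identification $\rho_{1}=\|T\|$ with $T$ built from the kernel $c$ alone, and the marginal identities $\int_{0}^{1}c(x,y)\,dy=\int_{0}^{1}c(x,y)\,dx=1$, which you need for $\int r(x,y)\,dy=1-\varepsilon_{1}(x)-b$ and for $\int Tf\,dx=0$. Both fail with a singular part: for the Frechet copula $aM+(1-a-b)P+bW$ one has $c\equiv 1-a-b$, so your $T$ annihilates $\mathbb{L}^{2}_{0}(0,1)$ and your bound would report $\rho_{1}=0$, whereas testing with a unit-norm mean-zero $f$ symmetric about $1/2$ gives $corr(f,f)=a+b>0$. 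The repair is to run your computation against the transition measure rather than the density: put $r(x,dy)=P(x,dy)-(\varepsilon_{1}(x)+\varepsilon_{2}(y))\,dy\ge 0$, note $\int_{I} r(x,dy)=1-\varepsilon_{1}(x)-b$ and $\int_{I}r(x,A)\,dx=(1-a)\mu(A)-\int_{A}\varepsilon_{2}(y)\,dy$ by the uniform marginals, and apply Cauchy--Schwarz with respect to the positive measure $r(x,\cdot)$; every step then survives verbatim. The paper's polarization argument avoids the issue automatically because it only ever uses the one-sided bound $C(dx,dy)\ge(\varepsilon_{1}(x)+\varepsilon_{2}(y))\,dx\,dy$ as measures. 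Your treatment of the geometric-ergodicity clause matches the paper's (both reduce to Theorem 4 of Longla and Peligrad (2012) once the density is positive a.e.).
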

\begin{remark} This theorem improves Theorem 4.2 of Beare (2010), by extending it to cases when the density can actually be equal to zero on a set of non-zero measure, and therefore not be bounded away from 0.  In the example below, we exhibit a copula that provides exponential $\rho$-mixing, but was ruled out by Theorem 4.2 of Beare (2010).
\end{remark}

\begin{exam}
Given any bounded and integrable functions $h: [0,1] \rightarrow [0,1],$  $g: [0,1]\rightarrow [0, 1] $, let $b_{1}=\sup{g}$, $a_{1}=\inf{g}$, $b_{2}=\sup{h}$ and $a_{2}=\inf{h}$. 

\begin{table}[th]
\vline
\centering
\rowcolors{1}{white}{gray!35}
\begin{tabular}{|c|}
\hline\hline
 Copula densities  \\[.5ex]
\hline
 $m_{1}(x,y)=\frac{b_{1}-g(x)h(y)+h(y)||g||_1 +g(x)||h||_1}{b_{1}+||g||_1||h||_1}$ \\[1ex]

\hline
 $m_{2}(x,y)=\frac{b_{1}b_{2}-g(x)h(y)+h(y)||g||_1 +g(x)||h||_1}{b_{1}b_{2}+||g||_1||h||_1}$  \\[1ex]

\hline
$m_{3}(x,y)=\frac{b_{1}(b_{2}-a_{2})-g(x)(b_{2}-h(y))+(b_{2}-h(y))||g||_1 +g(x)(b_{2}-||h||_1)}{b_{1}(b_{2}-a_{2})+||g||_1 (b_{2}-||h||_1)}$  \\[1ex]

\hline  $m_{4}(x,y)=\frac{(b_{1}-a_{1})(b_{2}-a_{2})-(b_{1}-g(x))(b_{2}-h(y))+(b_{2}-h(y))(b_{1}-||g||_1) +(b_{1}-g(x))(b_{2}-||h||_1)}{(b_{1}-a_{1})(b_{2}-a_{2})+(b_{1}-||g||_1) (b_{2}-||h||_1)}$ \\
\hline
\end{tabular}\vline

\caption{New copula families}
\label{tab: See1}
\end{table}
\end{exam}
Functions defined in Table \ref{tab: See1} are densities of copulas that generate exponential $\rho$-mixing Markov chains. The respective maximal correlation coefficients are bounded as shown in the Table \ref{tab: See2}.

\begin{table}[th]
\vline
\centering
\rowcolors{1}{white}{gray!35}
\begin{tabular}{|l|}
\hline\hline
 Upper bound on $\rho_{1}$ for the new families  \\[.5ex]
\hline
 $\rho_{1}(m_{1})\leq \frac{b_{1}}{b_{1}+||g||_1 ||h||_1}<1$ \\[1ex]

\hline
 $\rho_{1}(m_{2})\leq \frac{b_{1}b_{2}}{b_{1}b_{2}+||g||_1 ||h||_1}< 1$  \\[1ex]

\hline
$\rho_{1}(m_{3})\leq \frac{b_{1}(b_{2}-a_{2})}{b_{1}(b_{2}-a_{2})+||g||_1 (b_{2}-||h||_1)}<1$  \\[1ex]

\hline  $\rho_{1}(m_{4})\leq \frac{(b_{1}-a_{1})(b_{2}-a_{2})}{(b_{1}-a_{1})(b_{2}-a_{2})+(b_{1}-||g||_1) (b_{2}-||h||_1)}<1$ \\
\hline
\end{tabular}\vline

\caption{Upper bound on $\rho_{1}$ for the new copula families}
\label{tab: See2}
\end{table}

The copula densities in Table \ref{tab: See3} generate $\phi$-mixing Markov chains. 

\begin{table}[th]
\vline
\centering
\rowcolors{1}{white}{gray!35}
\begin{tabular}{| l| r |}
\hline\hline
 copula density   & parameters of the family  \\[.5ex]
\hline
 $c(x,y)=\frac{\frac{3}{2^{2-a}}+1+(1/2-y)x^{1/a-1}sign(1/2-x^{1/a})}{1+\frac{3}{2^{2-a}}}$ & $a\in(0,1]$ \\[1ex]

\hline
 $c(x,y)=1+\frac{\theta}{2a}x^{1/a-1}(2y-1)sign(1/2-x^{1/a}) $ & $\theta \in [-2a, 2a], \quad a\in(0,1]
$  \\[1ex]

\hline
$c(x,y)=\frac{c+1+\frac{\theta}{2a}x^{1/a-1}(2y-1)sign(1/2-x^{1/a})}{1+c} $  & $\theta \in [-2a, 2a], \quad a\in(0,1], \quad c\in\mathbb{R}^{+}$  \\[1ex]

\hline  $c(x,y)=\frac{c+1+ (1/2-y)x^{1/a-1}sign(1/2-x^{1/a})}{1+c}$  & $a\in(0,1], \quad c\in\mathbb{R}^{+}$ \\
\hline
\end{tabular}\vline

\caption{New $\phi$-mixing copula families}
\label{tab: See3}
\end{table}

The first entry of Table \ref{tab: See3} is bounded away from $0$ for all values of the parameter $a\in(0,1]$. The second entry of the table is bounded away from $0$ when $|\theta|< 2a$. The third entry of this table generates exponential $\phi$-mixing stationary Markov chains for all $c>0$ and $\theta\leq 2a$. Concerning the last entry, for all $c>0$ and $0<a\leq1$, the density is bounded away from $0$. Thus, it generates $\phi$-mixing Markov chains. 

The proof of Theorem \ref{Theo2}, unveals a more general result.

\begin{theorem}\label{Theo9} Let $f(x,y)$ be a nonnegative function in $\mathbb{L}^{1}(0,1)$ satisfying the following properties: 
\begin{enumerate}
\item $\int_{I}f(x,y)dx=1$ a.s;
\item $\int_{I}f(x,y)dy=1$ a.s.
\end{enumerate}
If there exist nonnegative functions $\varepsilon_{1}$ and $\varepsilon_{2}$ in $\mathbb{L}^{1}(0,1)$ such that  $f(x,y)\geq \varepsilon_{1}(x)+\varepsilon_{2}(y)$ a.s, then  $\displaystyle\int_{I}\varepsilon_{1}(x)dx+\int_{I}\varepsilon_{2}(x)dx < 2 \quad \mbox{and}$ $$ \Big|\int_{I^{2}}f(x,y)g(x)h(y)dxdy\Big|\leq \Big(1-\frac{1}{2}\Big(\int_{I}\varepsilon_{1}(x)dx+\int_{I}\varepsilon_{2}(x)dx
\Big)\Big)\Big(\int_{I}g^{2}(x)dx\Big)^{1/2}\Big(\int_{I}h^{2}(x)dx\Big)^{1/2}.$$
\end{theorem}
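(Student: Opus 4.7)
The plan is to first dispatch the integral bound $\int_I \varepsilon_1 + \int_I \varepsilon_2 < 2$ by integrating the pointwise hypothesis $f(x,y) \geq \varepsilon_1(x) + \varepsilon_2(y)$ over $I^2$; Fubini combined with either marginal condition gives $1 = \int_{I^2} f \geq \int_I \varepsilon_1 + \int_I \varepsilon_2$, which is strictly stronger than the asserted inequality against $2$.

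For the bilinear inequality, I would split $f(x,y) = \varepsilon_1(x) + \varepsilon_2(y) + r(x,y)$ with $r \geq 0$ a.e., and observe that in the intended $\rho$-mixing setting the test functions $g, h$ have mean zero, so by Fubini the separable piece contributes nothing:
$$\int_{I^2} \bigl(\varepsilon_1(x) + \varepsilon_2(y)\bigr) g(x) h(y)\, dx\, dy = \Bigl(\int_I \varepsilon_1 g\Bigr)\Bigl(\int_I h\Bigr) + \Bigl(\int_I g\Bigr)\Bigl(\int_I \varepsilon_2 h\Bigr) = 0.$$
The problem then reduces to bounding $\int\int r g h$, for which Cauchy-Schwarz with respect to the positive measure $r(x,y)\,dx\,dy$ yields
$$\Bigl|\int_{I^2} r g h\Bigr| \leq \Bigl(\int_{I^2} r(x,y) g(x)^2\, dx\, dy\Bigr)^{1/2} \Bigl(\int_{I^2} r(x,y) h(y)^2\, dx\, dy\Bigr)^{1/2}.$$

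Next I would compute the marginals of $r$ from those of $f$: $\int_I r(x,y)\, dy = 1 - \varepsilon_1(x) - \int \varepsilon_2$ and $\int_I r(x,y)\, dx = 1 - \int \varepsilon_1 - \varepsilon_2(y)$. Setting $a = \int_I \varepsilon_1$ and $b = \int_I \varepsilon_2$ and dropping the nonnegative $\varepsilon_i$ contributions, Fubini yields $\int\int r g^2 \leq (1-b)\,||g||_2^2$ and $\int\int r h^2 \leq (1-a)\,||h||_2^2$. Combining these gives $|\int\int f g h| \leq \sqrt{(1-a)(1-b)}\, ||g||_2\, ||h||_2$, and the AM-GM inequality $\sqrt{(1-a)(1-b)} \leq 1 - (a+b)/2$ converts this to the claimed bound.

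The main obstacle is not analytic but interpretive: one must recognize that $g$ and $h$ are implicitly mean zero, since the stated inequality fails for $g \equiv h \equiv 1$ whenever $\varepsilon_1$ or $\varepsilon_2$ has positive integral. The result should therefore be read in the $\mathbb{L}^2_0$ context in which it is applied to estimate $\rho_1$ in Theorem \ref{Theo2}. Once that is accepted, the estimates above are routine; the only technical care needed is justifying Fubini, which is automatic since $\varepsilon_i \in \mathbb{L}^1$, $r \leq f \in \mathbb{L}^1$, and $g, h \in \mathbb{L}^2[0,1] \subset \mathbb{L}^1[0,1]$.
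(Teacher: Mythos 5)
Your proof is correct, but it takes a genuinely different route from the paper's. The paper derives Theorem \ref{Theo9} as a byproduct of its proof of Theorem \ref{Theo2}: it starts from the polarization identity $2g(x)h(y)=g^{2}(x)+h^{2}(y)-(g(x)-h(y))^{2}$, integrates against $f(x,y)\,dx\,dy$ using the two marginal conditions to get $2\int fgh=2-\int f\cdot(g-h)^{2}$ for normalized $g,h$, and then lower-bounds $\int f\cdot(g-h)^{2}$ by $\int_{I}\varepsilon_{1}+\int_{I}\varepsilon_{2}$ using the pointwise inequality and the vanishing of the cross terms. You instead split $f=\varepsilon_{1}(x)+\varepsilon_{2}(y)+r(x,y)$, kill the separable part by the mean-zero condition, and apply Cauchy--Schwarz with respect to the measure $r\,dx\,dy$ together with the exact marginals of $r$. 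Your route buys a strictly sharper constant, $\sqrt{(1-a)(1-b)}\leq 1-\tfrac{a+b}{2}$ with $a=\int\varepsilon_{1}$, $b=\int\varepsilon_{2}$, and a cleaner first claim ($a+b\leq 1$, not merely $<2$); the paper's route is more elementary in that it needs only the unweighted identity and no computation of the marginals of the remainder. Both arguments use the mean-zero property of $g$ and $h$ in an essential way (the paper's cross-term cancellation is exactly your separable-part cancellation), and you are right that the theorem as literally stated omits this hypothesis and is false for $g\equiv h\equiv 1$; it must be read in the $\mathbb{L}^{2}_{0}$ setting in which it is applied. One small point worth making explicit: integrating $f(x,y)\geq\varepsilon_{1}(x)+\varepsilon_{2}(y)$ in $y$ alone gives $\varepsilon_{1}(x)\leq 1-b$ a.s.\ (and symmetrically $\varepsilon_{2}\leq 1-a$), so the $\varepsilon_{i}$ are essentially bounded and all the Fubini interchanges you invoke are indeed justified --- membership of $\varepsilon_{i}$ in $\mathbb{L}^{1}$ alone would not make $\varepsilon_{1}g$ integrable for $g\in\mathbb{L}^{2}$.
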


\subsection{Exponential $\rho$-mixing for Archimedean copulas}
 
Archimedian copulas have been studied by many researchers and are very popular. Beare (2012) proved that under some mild conditions, some strict Archimedian copulas generate geometrically ergodic Markov chains. Longla and Peligrad (2012) have shown that those assumptions imply $\rho$-mixing. We provide here a new result for non-strict Archimedean copulas.

\begin{theorem} \label{Theo3}
\quad

Let $\varphi$ be a non-strict standard generator of an Archimedean copula not equal to the Hoeffding lower bound. Assume $\varphi$ has a second derivative. The copula generates exponential $\rho$-mixing Markov chains if 
$\displaystyle \int_{0}^{1}(1-x)\Big(\frac{h(x)}{(\varphi^{'}o\varphi^{-1}(x))^{2}}\Big)^{2}dx<1, $ 
where $\displaystyle h(x)=\max_{0\leq y\leq 1-x}\varphi^{''}o\varphi^{-1}(x+y).$
\end{theorem}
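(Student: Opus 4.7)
The plan is to bound $\rho_1$ via a Hilbert--Schmidt estimate on the copula density. By the variational characterization $\rho_1=\sup_{f\in\mathbb{L}^2_0(0,1)}\|Tf\|_2/\|f\|_2$ and Cauchy--Schwarz applied to $Tf(u)=\int_0^1 c(u,v)f(v)\,dv$, one obtains $\rho_1^2\le\int_0^1\int_0^1 c(u,v)^2\,du\,dv$, so the task reduces to bounding this iterated integral by the right-hand side of the hypothesis.

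Using the Archimedean density formula $c(u,v)=-\varphi''(w)\varphi'(u)\varphi'(v)/(\varphi'(w))^3$ on $\{\varphi(u)+\varphi(v)<1\}$ with $w=\varphi^{-1}(\varphi(u)+\varphi(v))$, I would change variables $s=\varphi(u)$, $t=\varphi(v)$. Writing $\alpha(y):=-\varphi'(\varphi^{-1}(y))>0$, the Jacobian reads $du\,dv=ds\,dt/(\alpha(s)\alpha(t))$, and a direct computation gives
\[
\int_0^1\!\!\int_0^1 c(u,v)^2\,du\,dv=\iint_{\{s+t<1\}}\frac{\bigl(\varphi''\!\circ\!\varphi^{-1}(s+t)\bigr)^2\,\alpha(s)\,\alpha(t)}{\alpha(s+t)^6}\,ds\,dt.
\]

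The targeted bound now falls out of three observations: (i) by definition of $h$, $\varphi''\circ\varphi^{-1}(s+t)\le h(s)$ for $0\le t\le 1-s$; (ii) since $\varphi$ is decreasing and convex, the map $\alpha$ is monotonically increasing on $[0,1]$, so $\alpha(t)\le\alpha(s+t)$; (iii) likewise $\alpha(s)\le\alpha(s+t)$. Cascading these reduces the integrand to at most $h(s)^2/\alpha(s)^4$. The inner $t$-integral then contributes a factor $1-s$, producing the one-dimensional bound $\int_0^1(1-x)(h(x)/(\varphi'\circ\varphi^{-1}(x))^2)^2\,dx$, which is $<1$ by hypothesis. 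This gives $\rho_1<1$ and hence exponential $\rho$-mixing.

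The main obstacle is that a non-strict Archimedean copula not equal to the Hoeffding lower bound $W$ still carries a singular mass of size $-1/\varphi'(0)$ on the curve $\{\varphi(u)+\varphi(v)=1\}$, invisible to $c(u,v)$. The transition operator decomposes as $T=T_{ac}+T_s$ where $T_sf(u)=(1-p(u))f(v^{\ast}(u))$ with the involution $v^{\ast}(u)=\varphi^{-1}(1-\varphi(u))$, and the Hilbert--Schmidt estimate only controls $T_{ac}$. One must argue separately---exploiting the involution structure of $v^{\ast}$, the exclusion of $W$ (for which $h\equiv 0$ but the chain fails to mix), and the strict inequality in the hypothesis---that the singular contribution does not prevent $T$ from being a strict contraction on $\mathbb{L}^2_0(0,1)$.
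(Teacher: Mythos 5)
Your handling of the absolutely continuous part is correct and lands on exactly the paper's bound, by an essentially equivalent computation. The paper works in the transformed variables $X=\varphi(U)$, $Y=\varphi(V)$ from the outset: there the a.c.\ joint density is $-\varphi''\circ\varphi^{-1}(x+y)/(\varphi'\circ\varphi^{-1}(x+y))^3$ on $\{x+y\le 1\}$ and the marginals have density $1/(-\varphi'\circ\varphi^{-1})$, and it bounds the bilinear form $corr(f(X),g(Y))$ directly via two applications of H\"older, splitting $(-\varphi'\circ\varphi^{-1}(x+y))^3\ge(-\varphi'\circ\varphi^{-1}(x))^{5/2}(-\varphi'\circ\varphi^{-1}(y))^{1/2}$ using the same monotonicity of $\alpha=-\varphi'\circ\varphi^{-1}$ that you invoke. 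Your route (Hilbert--Schmidt bound in the original coordinates, then the substitution $s=\varphi(u)$, $t=\varphi(v)$ and the inequality $\alpha(s)^5\alpha(t)\le\alpha(s+t)^6$) is the same estimate in different clothing; it is marginally more wasteful since $\iint c^2$ rather than $\iint(c-1)^2$ is used, but that does not change the resulting sufficient condition.

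The obstacle you flag in your last paragraph is, however, a genuine gap that you do not close: for a non-strict generator with $\varphi'(0)>-\infty$ the copula carries singular mass $-1/\varphi'(0)>0$ on the curve $\varphi(u)+\varphi(v)=1$, so the transition operator is $T_{ac}+T_s$ and your displayed reduction $\rho_1^2\le\iint c^2\,du\,dv$ is not justified as written. Excluding $W$ does not by itself make $\|T_s\|$ small, and a triangle-inequality patch would require $\|T_{ac}\|+\|T_s\|<1$ with $\|T_s\|$ potentially close to $1$ for generators near $1-x$. You should be aware that the paper's own proof silently makes the same omission: it computes $corr(f(X),g(Y))$ by integrating against the absolutely continuous density alone, which is legitimate only when $\varphi'(0)=-\infty$; in the paper's own examples ($\varphi'(0)=\theta/((1-\theta)\ln(1-\theta))$ and $\varphi'(0)=-\theta$, both finite) the singular mass is strictly positive, so the published argument also only controls the a.c.\ contribution to the correlation. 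A complete proof would have to account for the term $(-1/\varphi'(0))\int_0^1 f(x)g(1-x)\,dx$ coming from the singular measure --- for instance by running the $2fg=f^2+g^2-(f-g)^2$ device of Theorem \ref{Theo2} on the full measure, or by adding a hypothesis that bounds $-1/\varphi'(0)$ --- and neither your proposal nor the paper supplies that step.
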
 
Notice that, if $\varphi^{''}$ is decreasing, then $h(x)=\varphi^{''}(0)$,  and if $\varphi^{''}$ is increasing, then $h(x)=\varphi^{''}o\varphi^{-1}(x)$. Moreover, if $\varphi^{'}(1)\neq0$, then it is enough to have $\displaystyle \int_{0}^{1}h^{2}(x)(1-x)dx<(\varphi^{'}(1))^4.
$

\begin{exam}
The Archimedean copula with generator $\tilde{\varphi}(u)=-\ln(\theta u+1-\theta)$, $\theta \in (0,1)$. 
\end{exam}
The standard generator is $\displaystyle\varphi(x)=\frac{\ln(\theta u+1-\theta)}{\ln(1-\theta)}$ and 
$\displaystyle \varphi^{-1}(x)=\frac{(1-\theta)^{x}-1+\theta}{\theta}$. $$\varphi^{'}(x)=\frac{1}{\ln(1-\theta)(x+\frac{1-\theta}{\theta})}, \quad \varphi^{'}o\varphi^{-1}(x)=\frac{\theta}{\ln(1-\theta)(1-\theta)^{x}}. $$
So, $\displaystyle \varphi^{''}o\varphi^{-1}(x+y)= \frac{\theta^{2}(1-\theta)^{-2(x+y)}}{-\ln(1-\theta)}, \quad h(x)=\frac{\theta^{2}(1-\theta)^{-2}}{-\ln(1-\theta)}.$ Therefore, we need

$$\int_{0}^{1}(1-x)\Big(\frac{h(x)}{(\varphi^{'}o\varphi^{-1}(x))^{2}}\Big)^{2}dx=
-\frac{\ln(1-\theta)}{4(1-\theta)^{4}}+\frac{1}{16}-\frac{1}{16(1-\theta)^{4}}<1.$$
Thus, copulas from this family generate exponential $\rho$-mixing Markov chains for $ \theta\in(0,\theta_{0}),$ where $\theta_{0} \approxeq .348$ is the unique value of $\theta$ for which the inequality becomes an equality. 

\begin{remark}
This example is taken from the list of Archimedean copulas  in \S{6} of Nelsen (2006). On this example we can see that Theorem \ref{Theo3} doesn't handle the case of $P$, corresponding to $\theta=1$ because this copula is strict. The case $\theta=0$ is ruled out by the assumptions. 
\end{remark}
 \begin{exam}
 The non-strict generator $\displaystyle
\varphi(x)=\frac{1-x}{1+(\theta-1)x}=\varphi^{-1}(x), \quad \theta\in [1,\infty).
$
\end{exam}
$$ \varphi^{'}(x)= \frac{-\theta}{(1+(\theta-1)x)^{2}}, \quad \varphi^{'}o\varphi^{-1}(x)=\frac{(1+(\theta-1)x)^{2}}{-\theta}, \quad \varphi^{''}(x)=\frac{-2\theta(\theta-1)}{(1+(\theta-1)x)^{3}}.$$
For this family of copulas,  $\varphi$ is decreasing. Therefore, $\displaystyle h(x)=-2\theta(\theta-1)$.

\noindent $\displaystyle \rho_{1}^{2}\leq \int_{0}^{1}\Big(\frac{(1-x)^{1/2}h(x)}{(\varphi^{'}o\varphi^{-1}(x))^{2}}\Big)^{2}dx =\frac{2}{21}+\frac{4}{7}\theta^{7}-\frac{2}{3}\theta^{6}=f(\theta).$
$f(\theta)$ is an increasing function on $[1,\infty)$ with $f(1)=0$, $f(\infty)=\infty$. Thus, there exists a unique $\theta_{0}\approxeq 1.388$ for which $f(\theta_{0})=1$. Therefore, the copula generates exponential $\rho$-mixing Markov chains for $\theta \in (1, \theta_{0})$.

\subsection{Convex combinations of copulas}
We will use in this section various methods to assess the rate of convergence of mixing coefficients of Markov chains generated by some copula families. We will use direct computation for $\rho$-mixing. We will also show how small sets are related to geometric ergodicity, and apply Theorem 8 of Longla and Peligrad (2012) to the example of the Mardia and Frechet families of copulas. Longla and Peligrad (2012) have shown the following.

\begin{lemma} \label{lemma1}
Any convex combination of geometrically ergodic reversible Markov chains is geometrically ergodic.
\end{lemma}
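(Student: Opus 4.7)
The plan is to exploit the well-known equivalence for reversible Markov chains between geometric ergodicity and exponential $\rho$-mixing, which reduces the lemma to a straightforward estimate on the operator norm of the convex-combination kernel. Concretely, for a reversible Markov chain the operator $T$ in \eqref{Tf} (acting on $\mathbb{L}^{2}_{0}(0,1)$) is self-adjoint, and Roberts-Rosenthal together with Kipnis-Varadhan-type spectral theory (precisely the comment used right after Theorem~\ref{Theo1} above) gives geometric ergodicity iff $\|T\|_{op}=\rho_{1}<1$. So for each reversible component chain with symmetric copula $C_{i}$ and operator $T_{i}$, we have $\rho_{1}^{(i)}=\|T_{i}\|_{op}<1$.

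Next I would set up the convex combination. Consider finitely many (or countably many with uniformly bounded $\rho_{1}^{(i)}$) reversible copula-based chains with symmetric densities $c_{i}$, weights $\alpha_{i}\ge 0$, $\sum\alpha_{i}=1$. The convex combination $C=\sum\alpha_{i}C_{i}$ has density $c=\sum\alpha_{i}c_{i}$, which is again symmetric, so the new chain is again reversible and the associated operator is $T=\sum\alpha_{i}T_{i}$, still self-adjoint on $\mathbb{L}^{2}_{0}(0,1)$. By the triangle inequality applied to $Tf$,
\[
\|Tf\|_{2}\le \sum_{i}\alpha_{i}\|T_{i}f\|_{2}\le \Bigl(\sum_{i}\alpha_{i}\rho_{1}^{(i)}\Bigr)\|f\|_{2},
\]
so $\rho_{1}(C)\le \sum_{i}\alpha_{i}\rho_{1}^{(i)}$.

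The key observation is that since each $\rho_{1}^{(i)}<1$ and $\sum_{i}\alpha_{i}=1$, the right-hand side is strictly less than $1$ (in the finite case trivially; in the countable case one invokes the uniform bound $\rho_{1}^{(i)}\le \rho^{*}<1$, which must be assumed or established from the hypotheses). Hence the convex-combination operator has spectral gap, i.e.\ $\rho_{1}(C)<1$, so the resulting Markov chain is exponentially $\rho$-mixing. Invoking once more the equivalence for reversible chains (Beare 2010, cited just below Theorem~\ref{Theo1}), exponential $\rho$-mixing yields geometric ergodicity, which completes the argument.

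The only genuine obstacle is ensuring the combined spectral radius stays strictly below $1$. In the finite-mixture case this is automatic from $\sum\alpha_{i}\rho_{1}^{(i)}<\sum\alpha_{i}=1$; if one wishes to allow countable mixtures, a uniform spectral-gap assumption is needed, and if one only has $\sup_{i}\rho_{1}^{(i)}=1$ the conclusion may fail. All other steps — symmetry of the sum, $T=\sum\alpha_{i}T_{i}$, and the triangle inequality on $\mathbb{L}^{2}_{0}$ — are routine, so the proof essentially boils down to the operator-norm estimate above combined with the two applications of the reversible-chain equivalence $\{\text{geometrically ergodic}\}\Leftrightarrow\{\rho_{1}<1\}$.
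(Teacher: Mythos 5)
First, a caveat on the comparison: the paper does not actually prove this lemma; it quotes it from Longla and Peligrad (2012), and the argument there is precisely the spectral-gap reduction you use, so your overall strategy is the intended one. The operator-norm estimate $\rho_{1}(C)\le\sum_{i}\alpha_{i}\rho_{1}^{(i)}<1$ for finite combinations is correct, and finite combinations are all the paper ever needs, so the countable-mixture caveat you raise is fine but beside the point.

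There is, however, one genuine gap in your final step. For a reversible stationary Markov chain, $\rho_{1}<1$ alone does not imply geometric ergodicity in the sense used here (exponential decay of $\beta_{n}$): the equivalence you invoke holds for \emph{irreducible and aperiodic} reversible chains, and the paper is explicit about this ("$\rho$-mixing and absolute regularity imply geometric ergodicity"). A spectral gap says nothing about $\beta_{n}$ if, for instance, $P(x,\cdot)$ is singular with respect to $\pi$ for a.e.\ $x$. So you must verify that the convex combination is irreducible and aperiodic. This is easy but has to be said: each component is geometrically ergodic, hence absolutely regular, hence irreducible and aperiodic; and since $Q=\sum_{j}\alpha_{j}Q_{j}\ge\alpha_{i}Q_{i}$ gives $Q^{n}(x,A)\ge\alpha_{i}^{\,n}Q_{i}^{n}(x,A)$ for any component with $\alpha_{i}>0$, the combination inherits both properties. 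A second, smaller issue: you phrase everything through the density-based operator \eqref{Tf}, but the lemma is applied in the paper to combinations involving the singular copulas $M$ and $W$ (Frechet and Mardia families), which have no density; the argument survives verbatim if you replace \eqref{Tf} by the general transition operator $Tf(x)=\mathbb{E}(f(X_{1})\mid X_{0}=x)$ on $\mathbb{L}^{2}_{0}(0,1)$, for which linearity in the kernel, self-adjointness under reversibility, and the identity $\rho_{1}=\|T\|$ still hold.
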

$\rho$-mixing and absolute regularity imply geometric ergodicity. Thus, Lemma \ref{lemma1} implies the folowing:

\begin{proposition} \label{cor2}
{ The Markov chain generated by any convex combination of copulas, one of which generates $\rho$-mixing Markov chains and another one generates absolutely regular Markov chains, is geometrically ergodic and exponential $\rho$-mixing.}
\end{proposition}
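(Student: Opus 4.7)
The plan is to prove the two conclusions separately, exploiting the decomposition
$C = \alpha C_1 + (1-\alpha) C_2$ with $\alpha \in (0,1)$, $C_1$ generating a $\rho$-mixing chain and $C_2$ generating an absolutely regular chain.

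For exponential $\rho$-mixing I first pass to operators on $\mathbb{L}^{2}_{0}(0,1)$. Since the density of $C$ is $c = \alpha c_1 + (1-\alpha) c_2$, the operator $T$ from (\ref{Tf}) decomposes as $T = \alpha T_1 + (1-\alpha) T_2$. Each $T_j$ is a Markov contraction, so $\|T_j\| = \rho_1(C_j) \leq 1$, and $\rho_1(C_1) < 1$ because for a stationary Markov chain $\rho$-mixing is equivalent to $\rho_1 < 1$. The triangle inequality then yields
\[
\rho_1(C) = \|T\| \leq \alpha \rho_1(C_1) + (1-\alpha) < 1,
\]
and the submultiplicativity $\|T^{n+m}\| \leq \|T^n\|\,\|T^m\|$ gives $\rho_n(C) \leq \rho_1(C)^n$, i.e.\ exponential $\rho$-mixing.

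For geometric ergodicity I invoke the remark preceding the proposition: $\rho$-mixing together with absolute regularity implies geometric ergodicity. The $\rho$-mixing having just been established, it remains to show that the chain from $C$ is absolutely regular, which I accomplish by transferring the minorization from $C_2$. Since $C_2$'s chain is absolutely regular it is Harris recurrent and aperiodic, so it admits a small set $S$ together with $m \in \mathbb{N}$, $q>0$ and a probability measure $\nu$ with $P^m_{C_2}(x,A) \geq q\nu(A)$ for all $x \in S$. Expanding
\[
C^{*n} = \bigl(\alpha C_1 + (1-\alpha) C_2\bigr)^{*n}
\]
by the bilinearity of the fold product produces a sum with nonnegative weights, among which the pure term $(1-\alpha)^n C_2^{*n}$ appears; since fold products of copulas are copulas and hence have nonnegative densities, $c_n(x,y) \geq (1-\alpha)^n c_{2,n}(x,y)$ pointwise. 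Integrating gives $P^n_{C}(x,A) \geq (1-\alpha)^n P^n_{C_2}(x,A)$, so $S$ is also a small set for $C$ with minorization constant $(1-\alpha)^m q$. All three chains share the uniform invariant distribution, so Harris recurrence and aperiodicity transfer from $C_2$ to $C$, which is precisely absolute regularity of $C$'s chain.

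The main obstacle, as I see it, is the minorization transfer in the second step: one must verify that the fold product distributes bilinearly over convex combinations and that all the cross terms in the expansion of $C^{*n}$ contribute nonnegative density, so that the pointwise lower bound $c_n \geq (1-\alpha)^n c_{2,n}$ is legitimate. The $\rho$-mixing bound, by contrast, is essentially the triangle inequality on operator norms and is immediate.
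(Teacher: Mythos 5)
Your overall architecture coincides with the paper's: the paper dispatches this proposition in the sentence preceding it, by combining Lemma \ref{lemma1} with the observation that $\rho$-mixing plus absolute regularity yields geometric ergodicity, and the two ingredients you supply --- an operator-norm bound inherited from $C_1$ and a kernel-domination argument inherited from $C_2$ --- are exactly what that sentence presupposes. Your domination $P^n_{C}(x,A)\ge(1-\alpha)^n P^n_{C_2}(x,A)$ is correct and already transfers irreducibility and aperiodicity through the definitions given in Section 1.3, so the small-set/minorization detour is harmless but unnecessary (a small set alone does not give absolute regularity; it is the transferred irreducibility and aperiodicity, via Corollary 21.7 of Bradley, that do). Two minor cautions there: phrase the domination at the level of transition measures rather than densities, since the copulas in a convex combination (e.g.\ $M$ and $W$ in the Frechet family) need not be absolutely continuous; and note that positive recurrence is automatic because all the chains share the uniform invariant probability.

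The one genuine soft spot is the assertion that for a stationary Markov chain $\rho$-mixing is equivalent to $\rho_1<1$. That equivalence holds for reversible chains, where $T$ is self-adjoint and $\rho_n=\|T^n\|=\|T\|^n$ --- this is what Lemma \ref{Lem1} establishes for symmetric copulas --- but in general $\rho$-mixing of a Markov chain is only equivalent to $\rho_N<1$ for \emph{some} $N\ge1$: one can have $\rho_1=1$ yet $\rho_2=0$ (take $X_n=(Y_n,Y_{n+1})$ with $Y_n$ i.i.d.). If $\rho_1(C_1)=1$, your bound $\rho_1(C)\le\alpha\rho_1(C_1)+(1-\alpha)$ collapses to $1$ and proves nothing. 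The repair is your own argument run at lag $N$: expand $T^N=(\alpha T_1+(1-\alpha)T_2)^N$ into $2^N$ words, each a contraction on $\mathbb{L}^2_0(0,1)$; the word $\alpha^N T_1^N$ has norm at most $\alpha^N\rho_N(C_1)$ and the remaining words have coefficients summing to $1-\alpha^N$, so $\rho_N(C)\le 1-\alpha^N\bigl(1-\rho_N(C_1)\bigr)<1$, and submultiplicativity then gives exponential $\rho$-mixing. With that modification the proof is complete and matches the route the paper intends.
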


\begin{exam}{Exponential $\beta$-mixing for the Frechet and Mardia families of copulas}
\begin{equation}
C(x,y)=\frac{\theta^{2}(1+\theta)}{2}M(x,y)+(1-\theta^2)P(x,y)+\frac{\theta^{2}(1-\theta)}{2}W(x,y), \label{Mardia} \quad \theta\in[-1,1],
\end{equation} 
\begin{equation}
C(x,y)=C_{a,b}(x,y)=a M(x,y) +(1-a-b)P(x,y)+b W(x,y) \quad (0\leq a+b\leq 1). \label{Frechet}
\end{equation}
\end{exam}
(\ref{Mardia}) defines the Mardia family of copulas and (\ref{Frechet}) defines the Frechet family of copulas. Notice that a Mardia copula is a Frechet copula with $a+b=\theta^{2}$. Any copula from this family has a singular part (See Longla (2013) for more). 
We shall show the following.

\begin{theorem} \label{theo4}
{ A stationary Markov chain generated by a copula from the Frechet (Mardia) family with uniform marginal has $n$-steps joint cumulative distribution function $\displaystyle C_{a_{n},b_{n}}(x,y)$, where 
\begin{equation}
a_{n}=\frac{1}{2}[(a+b)^{n}+(a-b)^{n}], \quad b_{n}=\frac{1}{2}[(a+b)^{n}-(a-b)^{n}].
\end{equation}
The Markov chain is exponentially $\phi$-mixing, therefore $\rho$-mixing and geometrically $\beta$-mixing for $a+b\ne 1$. For copulas with $a+b= 1$, there is no mixing.}
\end{theorem}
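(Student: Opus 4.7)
My plan is to first iterate the fold product to compute $C^{*n}_{a,b}$ in closed form, and then read off the conditional distribution of $X_n$ given $X_0$ in order to estimate the mixing coefficients directly. The main tool is the multiplication table for the fold product $*$ on $\{M,W,P\}$: using $M_{,1}(u,v)=1_{\{u\le v\}}$, $W_{,1}(u,v)=1_{\{u+v\ge 1\}}$, $P_{,1}(u,v)=v$ and their analogues in the second variable, direct integration yields $M*M=M$, $W*W=M$, $P*P=P$, together with $M*P=P*M=W*P=P*W=P$ and $M*W=W*M=W$. Expanding $C_{a_{n},b_{n}}*C_{a,b}$ by bilinearity of $*$ and collecting the nine terms produces the recursion $a_{n+1}=a\,a_{n}+b\,b_{n}$, $b_{n+1}=b\,a_{n}+a\,b_{n}$. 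Introducing $\alpha_{n}=a_{n}+b_{n}$ and $\beta_{n}=a_{n}-b_{n}$ diagonalizes this into $\alpha_{n+1}=(a+b)\alpha_{n}$ and $\beta_{n+1}=(a-b)\beta_{n}$, so by induction $\alpha_{n}=(a+b)^{n}$ and $\beta_{n}=(a-b)^{n}$, giving the stated formulas for $a_{n}$ and $b_{n}$.

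For the mixing rate, the copula $C_{a_{n},b_{n}}$ together with the uniform marginal makes the conditional distribution of $X_{n}$ given $X_{0}=x$ equal to the mixture $a_{n}\delta_{x}+b_{n}\delta_{1-x}+(1-a_{n}-b_{n})\mu$. Hence for every Borel $B\subset I$,
\begin{equation*}
P(X_{n}\in B\mid X_{0}=x)-P(X_{n}\in B)=a_{n}1_{B}(x)+b_{n}1_{B}(1-x)-(a_{n}+b_{n})\mu(B).
\end{equation*}
When $a+b<1$, this is bounded in absolute value by $a_{n}+b_{n}$, and the bound is attained at $B=\{x,1-x\}$ (a Borel set of Lebesgue measure zero), so $\phi_{n}=(a+b)^{n}$. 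This gives exponential $\phi$-mixing, from which exponential $\rho$-mixing and geometric $\beta$-mixing follow from the standard implications $\phi\Rightarrow\rho$ and $\phi\Rightarrow\beta$.

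When $a+b=1$ the $P$-component vanishes and $X_{n}\in\{X_{0},1-X_{0}\}$ almost surely for every $n$, so to rule out any mixing I will exhibit a bounded, $y\mapsto 1-y$-invariant test function with mean zero and nonzero variance, for instance $f(y)=|y-1/2|-1/4$; since $f(X_{n})=f(X_{0})$ almost surely, $\mathrm{corr}(f(X_{0}),f(X_{n}))=1$, which forces $\rho_{n}=1$, while taking $A=\{x,1-x\}$ in the conditional law yields $\phi_{n}=\beta_{n}=1$. The main obstacle I foresee is the careful verification of the multiplication table for $*$ on $\{M,W,P\}$, in particular the identities $W*W=M$ and $M*W=W*M=W$, which require integrating indicators supported on half-planes together with a small case analysis at the boundary; once that table is in hand, the theorem reduces to a bilinear expansion for the first assertion and a direct computation of the mixing coefficients for the rest.
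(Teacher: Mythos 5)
Your proof is correct, and for the mixing assertions it takes a more self-contained route than the paper. For the $n$-step formula both arguments rest on the multiplicativity of the fold product over the family $\{M,P,W\}$; the paper merely asserts the recurrence between the copulas of $(Y_0,Y_n)$ and $(Y_0,Y_{n+1})$, whereas you verify the multiplication table and diagonalize the resulting linear recursion via $\alpha_n=a_n+b_n$, $\beta_n=a_n-b_n$ --- exactly the detail the paper omits. For $a+b\neq 1$ the paper does not use the explicit $n$-step law at all: it invokes Theorem 8 of Longla and Peligrad (2012) (the density of the absolutely continuous part is bounded below by $1-a-b>0$, hence $\phi$-mixing, hence geometric ergodicity and $\rho$-mixing), and then, as a separate illustration of the small-set technique, constructs the small set $S=[1/2,1]$ and a Lyapunov function to re-derive geometric ergodicity from a drift condition. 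Your computation of the conditional law $a_n\delta_x+b_n\delta_{1-x}+(1-a_n-b_n)\mu$ yields the sharper quantitative bound $\phi_n\le (a+b)^n$ directly, at the price of needing the $n$-step formula first; one small caveat is that the claimed attainment at $B=\{x,1-x\}$ is not literally valid, since $\phi$ is a supremum over conditioning events of \emph{positive} probability (one only gets $\phi_n=(a+b)^n$ as a limit along shrinking neighborhoods), but only the upper bound is needed for the theorem. For $a+b=1$ both proofs exhibit a mean-zero function invariant under $u\mapsto 1-u$ and therefore fixed by the transition operator ($\cos(2\pi x)$ in the paper, $|y-1/2|-1/4$ in yours), forcing $\rho_n=1$; you additionally make explicit that $\phi_n=\beta_n=1$, where the paper appeals to symmetry.
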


\begin{remark}
\quad

\begin{enumerate}
\item First of all, notice that Theorem \ref{Theo2} can be applied to both families to show that we have exponential $\rho$-mixing for $a+b\neq 1$. Theorem \ref{Theo2} can't be used for copulas with $a+b=1$. 
\item The absolute continuous part of the copula for these families has density $1-a-b$. So, we can conclude, using Theorem 8 of Longla and Peligrad (2012), that this density being bounded away from zero when $1-a-b\neq 0$, the copula families generate $\phi$-mixing. This, on its own, implies  geometric ergodicity and $\rho$-mixing for the Markov chains generated by these copulas.
\end{enumerate}
\end{remark}

\subsection{Practical Example for simulation studies}
 A popular kernel that is used to generate Markov chains with a given probability of staying at the same state $x$ equal to $p(x)$, where $x\in [-1, 1]$, is defined by:
$$Q(x, A)=p(x)\delta_{A}(x)+(1-p(x))\nu(A), \quad \mbox{where $\nu$ is a probability measure on} \quad [-1, 1].$$
If $\displaystyle \theta=\int^{1}_{-1}\frac{\nu(dx)}{1-p(x)}< \infty$, then the invariant distribution is defined by $\displaystyle\pi(dx)=\frac{\nu(dx)}{\theta(1-p(x))}.$ For references on this example, see Longla and Peligrad (2012). If we allow the acceptance probability to depend on a parameter $a$, then require the marginal distribution to be uniform on $[-1, 1]$ and $\nu$ absolutely continuous with respect to the Lebesgue measure, having density $h(x,a)$, then it follows that
$\displaystyle Q(x,A)=p(x,a)\delta_{A}(x)+(1-p(x, a))\nu(A)$.
$\pi(dx)=\frac{1}{2}dx$ implies $\displaystyle h(x,a)=k(1-p(x,a)), \quad\mbox{where}\quad \theta=2k.$
To analyze the mixing structure of the Markov chain generated by this transition kernel and the given invariant distribution, we derive the corresponding copula.
The corresponding copula representation of the transition probability $\mathbb{P}(x,(-1,y])$ is given by $\displaystyle C_{,1}(\frac{x+1}{2},\frac{y+1}{2})=p(x,a)\mathbb{I}(x\leq y)+k(1-p(x,a))\int_{-1}^{y}(1-p(t,a))dt.$
So, using the transformation formula $(U,V)=(F(X),F(Y))$ and the Sklar's theorem, it follows that 
$\displaystyle C_{,1}(u,v)=p(2u-1,a)\mathbb{I}(u\leq v)+k(1-p(2u-1,a))\int_{-1}^{2v-1}(1-p(t,a))dt.$  Using the notation $f(x)=\int_{-1}^{x}p(t,a)dt$ and integrating with respect to $u$, we obtain
the copula 
\begin{eqnarray} C(u,v)=\frac{1}{2}\Big[f\Big(\min(2u-1,2v-1)\Big)+k(2u-f(2u-1))(2v-f(2v-1))\Big]. \label{copMH} 
\end{eqnarray}
If we take $p(x,a)= a|x|$ with $a \leq1$, then 
$\displaystyle Q(x,A)=a|x|\delta_{A}(x)+k(1-a|x|)\int_{A}(1-a|t|)dt.$

\begin{theorem} \label{MHMixing}
The stationary Markov chain generated by the above transition kernel and the uniform distribution is exponential $\rho$-mixing and geometrically ergodic for $a<1$. The Markov chain is $\beta$-mixing with rate $1/n$, but not $\rho$-mixing when $a=1$.
\end{theorem}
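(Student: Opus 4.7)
The argument splits by whether $a<1$ or $a=1$, and in both regimes I work directly from the explicit copula in (\ref{copMH}).

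\emph{Case $a<1$.} With $p(x,a)=a|x|$ and $k=1/(2-a)$, a short differentiation of (\ref{copMH}) identifies the absolutely continuous part of the copula density as
\begin{equation*}
c_{ac}(u,v)=2k(1-a|2u-1|)(1-a|2v-1|)\ge 2k(1-a)^{2}>0
\end{equation*}
on all of $[0,1]^{2}$. Taking the constant minorants $\varepsilon_{1}\equiv\varepsilon_{2}\equiv k(1-a)^{2}$ in Theorem \ref{Theo2} yields exponential $\rho$-mixing, and since $c_{ac}>0$ everywhere the ``moreover'' clause of that theorem delivers geometric ergodicity as well. Equivalently, the uniform Doeblin minorization $Q(x,\cdot)\ge(1-a)\nu(\cdot)$, valid because $1-a|x|\ge 1-a$, already produces uniform (hence geometric) ergodicity.

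\emph{Case $a=1$, no $\rho$-mixing.} The copula (\ref{copMH}) is visibly symmetric in $(u,v)$, so the chain is reversible, the transition operator $T$ is self-adjoint on $L^{2}_{0}(\pi)$, and $\rho_{n}=\rho_{1}^{n}$ for every $n$. To prove $\rho_{1}=1$, I would test $T$ on the centered odd functions $f_{n}(x)=\mathrm{sign}(x)\mathbb{I}\{|x|>1-1/n\}$. Oddness of $f_{n}$ and evenness of the jump density $1-|y|$ give $\int f_{n}(y)(1-|y|)dy=0$, so $(Tf_{n})(x)=|x|f_{n}(x)$. A one-line calculation produces $\|f_{n}\|_{2}^{2}=1/n$ and $\|Tf_{n}\|_{2}^{2}=\int_{1-1/n}^{1}x^{2}\,dx=1/n+O(1/n^{2})$, so $\rho_{1}=1$, and therefore $\rho_{n}\equiv 1$, killing $\rho$-mixing.

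\emph{Case $a=1$, $\beta$-mixing rate.} The lower bound is supplied by the atom: $Q^{n}(x,\{x\})\ge|x|^{n}$ while $\pi(\{x\})=0$, so $\|Q^{n}(x,\cdot)-\pi\|_{TV}\ge|x|^{n}$ and $\beta_{n}\ge\int_{-1}^{1}|x|^{n}\,dx/2=1/(n+1)$. For the matching upper bound I would use the first-jump regeneration
\begin{equation*}
Q^{n}(x,\cdot)=|x|^{n}\delta_{x}+\sum_{k=1}^{n}|x|^{k-1}(1-|x|)Q^{n-k}(\nu,\cdot)
\end{equation*}
produced by the geometric first-jump time with parameter $1-|x|$, after which the chain takes a fresh $\nu$-sample independent of the past. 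Coupling two chains started from $x$ and from $\pi$ by common Bernoulli stay/jump coins and common $\nu$-samples merges them at the first step whose coin falls in $(\max(|X_{k-1}|,|X_{k-1}'|),1]$; a careful accounting of this coupling time (the key quantitative input being $\|Q^{m}(\nu,\cdot)-\pi\|_{TV}=O(1/m)$) plugged into the displayed identity and integrated against $\pi$ yields $\beta_{n}\le C/n$. The delicate step, and the main obstacle, is exactly this upper bound in the $a=1$ regime: stickiness near $|x|=1$ forces polynomial rather than exponential decay, so one must track how often the coupled chains sit simultaneously close to the boundary. The other components reduce to a direct invocation of Theorem \ref{Theo2} (the $a<1$ case), a symmetry-plus-self-adjointness argument (failure of $\rho$-mixing), and a one-line atom bound (the $\beta$-mixing lower bound).
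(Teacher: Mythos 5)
Your treatment of the case $a<1$ and of the failure of $\rho$-mixing at $a=1$ is correct but follows a genuinely different route from the paper. For $a<1$ the paper never touches the copula density: it invokes Lemma \ref{DMR} (Lemma 2 of Doukhan, Massart and Rio (1994)), which for this kernel gives
$$\mathbb{E}_{\pi}(p^{n}(X,a))\leq\beta_{n}\leq 3\,\mathbb{E}_{\pi}(p^{[n/2]}(X,a))=\frac{3a^{[n/2]+1}}{[n/2]+1},$$
so that $\beta_{n}=O(a^{n/2})$, and then obtains exponential $\rho$-mixing from reversibility via Theorem 4 of Longla and Peligrad (2012). Your identification of the absolutely continuous density $2k(1-a|2u-1|)(1-a|2v-1|)\geq 2k(1-a)^{2}$ and the appeal to Theorem \ref{Theo2} (or, even more directly, the Doeblin minorization $Q(x,\cdot)\geq(1-a)\nu(\cdot)$) is equally valid and in fact yields the stronger conclusion of uniform ergodicity. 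For $a=1$ the paper argues indirectly: the lower bound $\beta_{n}\geq 1/(n+1)$ rules out geometric ergodicity, and for a reversible chain $\rho$-mixing would force geometric ergodicity, a contradiction. Your direct computation with the odd test functions $f_{n}(x)=\mathrm{sign}(x)\mathbb{I}\{|x|>1-1/n\}$, giving $Tf_{n}=|x|f_{n}(x)$ and $\|Tf_{n}\|_{2}^{2}/\|f_{n}\|_{2}^{2}\to 1$, hence $\rho_{1}=1$ and $\rho_{n}=\rho_{1}^{n}=1$ by self-adjointness, is cleaner and more informative, and it is correct. Your atom-based lower bound $\beta_{n}\geq 1/(n+1)$ coincides with the paper's.

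The one genuine gap is the upper bound $\beta_{n}\leq C/n$ at $a=1$, which you yourself flag as ``the main obstacle.'' The first-jump decomposition and the coupling you describe are the right ingredients, but the quantitative step --- controlling the coupling time when both chains are sticky near $|x|=1$, and in particular the asserted input $\|Q^{m}(\nu,\cdot)-\pi\|_{TV}=O(1/m)$ --- is stated, not proved, and it is exactly where all the work lies. The paper closes this in one line by citing Lemma \ref{DMR}: at $a=1$ the upper bound reads $\beta_{n}\leq 3\,\mathbb{E}_{\pi}(|X|^{[n/2]})=3/([n/2]+1)=O(1/n)$. You should either carry out your coupling estimate in full (essentially reproving that lemma for this kernel) or simply invoke the Doukhan--Massart--Rio bound, which applies verbatim to kernels of the form $p(x)\delta_{x}+(1-p(x))\nu$.
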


\begin{lemma} \label{MHcopula}
Any function of the form (\ref{copMH}) with an increasing differentiable function $f$ satisfying $f(-1)=0$ and $f(1)=2-1/k$ defines a one parameter copula family for $2\geq k > 0$. This family generates exponential $\rho$-mixing Markov chains for $0< k<2$.
\end{lemma}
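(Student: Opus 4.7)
The plan has three parts. First I would verify that (\ref{copMH}) defines a copula: setting $u=0$ gives $2u-1=-1$ and $f(-1)=0$, killing both summands and yielding $C(0,v)=0$; setting $u=1$ gives $2-f(1)=1/k$ and $\min(1,2v-1)=2v-1$, so (\ref{copMH}) collapses to $\tfrac{1}{2}[f(2v-1)+(2v-f(2v-1))]=v$. For the $2$-increasing property I would decompose the induced measure into an absolutely continuous piece with density
\[ c(u,v)=2k\bigl(1-f'(2u-1)\bigr)\bigl(1-f'(2v-1)\bigr), \]
obtained by twice-differentiating the product term and nonnegative precisely when $f'\le 1$---forced on an increasing $f$ by the $2$-increasing requirement---together with a singular diagonal piece with linear density $f'(2u-1)$, nonnegative by monotonicity of $f$; the AC mass $1/(2k)$ and singular mass $1-1/(2k)$ add to $1$.

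Next, computing $C_{,1}$ on $\{u<v\}$ and on $\{u>v\}$ exhibits the transition kernel
\[ P(u,dv)=p(u)\,\delta_u(dv)+(1-p(u))\,\nu(dv),\quad p(u)=f'(2u-1),\ \ \nu(dv)=2k(1-p(v))\,dv, \]
which is exactly the kernel from the discussion before Theorem \ref{MHMixing}; the uniform distribution is invariant since $\int_0^1 P(u,A)\,du=|A|$. Symmetry of $C$ in $(u,v)$ gives reversibility, so the Markov operator $T$ is self-adjoint on $\mathbb{L}^2(0,1)$ and $\rho_n=\rho_1^n$; it remains to show $\rho_1<1$. Writing $T\phi(u)=p(u)\phi(u)+(1-p(u))c$ with $c=2k\int_0^1(1-p)\phi\,du$, the mean-zero condition on $\phi\in\mathbb{L}^2_0(0,1)$ gives $c=-2k\int p\phi\,du$ and hence $\int T\phi\,du=0$. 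The identity $T\phi-c=p(\phi-c)$, combined with $\int(T\phi-c)\,du=-c$, then yields
\[ \|T\phi\|_2^2=\int_0^1 p(u)^2\bigl(\phi(u)-c\bigr)^2\,du-c^2\le(\sup p)^2(1+c^2)-c^2\le(\sup p)^2, \]
the last inequality from $\sup p\le 1$. Thus $\rho_1\le\sup p$, and the Doeblin minorization $P(u,\cdot)\ge(1-\sup p)\nu$ gives uniform, hence geometric, ergodicity whenever $\sup p<1$.

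The principal obstacle is turning the hypothesis $k<2$ into the strict inequality $\sup p<1$. The mean-slope constraint $\int_0^1 p\,du=1-1/(2k)<3/4$ for $k<2$ excludes $p\equiv 1$ but does not literally prevent $\{p=1\}$ from having positive Lebesgue measure; the intended reading of the hypothesis---reflecting the Metropolis-Hastings interpretation in which $p$ is an acceptance probability strictly below $1$---is that $\sup f'<1$, in which case $\rho_1\le\sup p<1$ yields exponential $\rho$-mixing. At the boundary $k=2$ one may take $f$ with $p=1$ on a set of measure $3/4$ and build $\phi\in\mathbb{L}^2_0$ supported there with $T\phi=\phi$, giving $\rho_1=1$, which confirms that the restriction $k<2$ is sharp for this method.
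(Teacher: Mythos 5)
The paper states this lemma without proof, so there is no in-paper argument to compare yours against line by line; the closest material is the derivation of the kernel preceding Theorem \ref{MHMixing} and the proof of that theorem via the Doukhan--Massart--Rio bound on $\beta_n$. Your route is self-contained and essentially correct: the boundary checks, the decomposition into an absolutely continuous part with density $2k(1-f'(2u-1))(1-f'(2v-1))$ plus a diagonal singular part of mass $1-1/(2k)$, the recovery of the kernel $P(u,dv)=p(u)\delta_u(dv)+(1-p(u))\nu(dv)$ with $p(u)=f'(2u-1)$, and the operator bound $\rho_1\le\sup p$ via the identity $T\phi-c=p(\phi-c)$ all check out. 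Two small caveats on the copula part: the $2$-increasing property genuinely requires $f'\le 1$, which is an additional hypothesis on $f$ rather than a consequence of those stated (it is automatic in the Metropolis--Hastings reading where $f'$ is an acceptance probability), and for $k<1/2$ the hypotheses are vacuous, since $f(1)=2-1/k<0=f(-1)$ is incompatible with monotonicity.

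The obstacle you flag at the end is real, and it is a defect of the lemma as stated rather than of your argument: $k<2$ only pins down the average of $f'$ over $[-1,1]$, namely $1-\tfrac{1}{2k}$, and says nothing about its supremum. The paper in fact supplies its own counterexample. Take $p(x)=|x|$, i.e. $f(x)=\int_{-1}^x|t|\,dt$: this $f$ is increasing and differentiable with $f(-1)=0$ and $f(1)=1=2-1/k$ for $k=1<2$, yet Theorem \ref{MHMixing} asserts that the resulting chain ($a=1$) is $\beta$-mixing only at rate $1/n$ and is not $\rho$-mixing. This also shows that your closing remark can be sharpened: one does not need $\{p=1\}$ to have positive measure, since $\operatorname{ess\,sup}p=1$ already forces $\beta_n\ge\mathbb{E}_\pi(p^n)$ to decay subexponentially, which by reversibility rules out exponential $\rho$-mixing. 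The correct statement, which your argument does prove, is that the family generates exponential $\rho$-mixing (indeed uniformly ergodic) Markov chains whenever $\sup f'<1$, with $\rho_1\le\sup f'$; the condition $0<k<2$ alone is not sufficient.
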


\section{Appendix: Mathematical proofs}
\subsection{Lemma \ref{Lem1}}
The first part of the concluion belongs to Beare (2010). Assume (U,V) is a random vector with distribution $C(u,v)$. The square integrable density of $C$ defines a Hilbert-Schmidt operator, and therefore a compact operator. For an operator with these properties, there exists a basis of eigen-functions in $\mathbb{L}^{2}(0,1)$.  Reversibility implies a spectral representation of the kernel of $T$ in the form 
$\displaystyle c(u,v)=\sum_{i=0}^{\infty} \lambda_{i}\varphi_{i}(u)\varphi_{i}(v),$
where $\varphi_{i}(u)$ are the eigen-functions corresponding to the eigen-values $\lambda_{i}$ of $T$, and form an orthonormal basis of $\mathbb{L}^{2}(0,1)$.  $\lambda_{i}\in\mathbb{R}^{+}$ is a decreasing sequence. $\lambda_{0}=1$ is eigen-value with eigen-function $1$. Therefore, $\lambda^{k}_{i}$ are eigen-values of the operator $T^{k}$ with kernel $c^{k}$ corresponding to the same eigen-functions. So, 
$\displaystyle c^{k}(u,v)=1+\sum_{i=1}^{\infty} \lambda^{k}_{i}\varphi_{i}(u)\varphi_{i}(v).
$
 $\rho_{k}$ coefficient becomes
$\displaystyle
\rho_k=\sup_{f,g}\Big\{\int_{0}^{1}\int_{0}^{1}c^{k}(u,v)f(u)g(v)dudv : \mathbb{E}(f)=\mathbb{E}(g)=0, \mathbb{E}(f^{2})=\mathbb{E}(g^{2})=1\Big\}.
$

 Bounding this quantity by use of Jensen's inequality, then H\"{o}lder's inequality, we obtain

\noindent$\displaystyle |\int_{0}^{1}\int_{0}^{1}c^{k}(u,v)f(u)g(v)dudv|=|\sum_{i=1}^{\infty}
\lambda^{k}_{i}\int_{0}^{1}\varphi_{i}(u)f(u)du\int_{0}^{1}\varphi_{i}(v)g(v)dv|\leq $

$\displaystyle \leq \sum_{i=1}^{\infty}\lambda^{k}_{i}\Big(\int_{0}^{1}\varphi^{2}_{i}(u)du\Big)^{1/2}\Big(\int_{0}^{1}f^{2}(u)du\Big)^{1/2}
\Big(\int_{0}^{1}\varphi^{2}_{i}(v)dv\Big)^{1/2}\Big(\int_{0}^{1}g^{2}(v)dv\Big)^{1/2}\leq \sum_{i=1}^{\infty}\lambda^{k}_{i}.
$

\noindent So, for $k\geq 2$, we have 
$\displaystyle
\rho_{k}\leq \sum_{i=1}^{\infty} \lambda^{k}_{i}\leq \lambda^{k-2}_{1}\sum_{i=1}^{\infty}\lambda^{2}_{i}\leq M\lambda^{k}_{1}. $
Here $\displaystyle M=(\sum_{i=1}^{\infty}\lambda^{2}_{i})/\lambda^{2}_{1}$. The series converges because we have a Hilbert-Schmidt operator.
 Therefore, if $\lambda_{1} <1$, then $\rho_{k}$ converges to $0$ exponentially fast. On the other hand,  
$\displaystyle c^{k}(u,v)=1+\lambda_{1}^{k}\varphi_{1}(u)\varphi_{1}(v)+\sum_{i=2}^{\infty} \lambda^{k}_{i}\varphi_{i}(u)\varphi_{i}(v).$
Because the basis is orthonormal, we have $\displaystyle corr(\varphi_{1}(U), \varphi_{1}(V))=\int_{0}^{1}\int_{0}^{1}c^{k}(u,v)\varphi_{1}(u)\varphi_{1}(v)dudv$. Therefore, $\displaystyle corr(\varphi_{1}(U), \varphi_{1}(V))=\lambda^{k}_{1}\int_{0}^{1}\varphi^{2}_{1}(u)du\int_{0}^{1}\varphi_{1}^{2}(v)dv=\lambda^{k}_{1}.$
Thus, $1\geq\rho_{k}\geq \lambda^{k}_{1}$. Therefore, if $\lambda_{1}=1$, then $\rho_{k}=1$ for all $k$. So, we have exponential $\rho$-mixing if and only if $\rho_{1}<1$. By formula (5) in Longla and Peligrad (2012), $\displaystyle\rho_{1}=\sup_{f\in L^{2}_{0}(0,1)}\frac{||Tf||_{2}}{||f||_{2}}$. In this case, this norm is $\lambda_{1}$. Thus, $\rho_{1}=\lambda_{1}$. Also, it is well known that $\rho_{k}\leq \rho_{1}^{k}$, and we have just shown, that $\lambda_{1}^{k}\leq\rho_{k}$. Thus, $\rho_{k}=\lambda_{1}^{k}$.

\subsection{Theorem \ref{Theo1}}
$\displaystyle \beta_{k}=\int_{0}^{1}\sup_{B}|\int_{B}(c(u,v)-1)du|dv.$ 
Using Jensen's and H\"{o}lder's inequalities leads to

\noindent$\displaystyle
\beta_{k}=\int_{0}^{1}\sup_{B}|\int_{B}(\sum_{i=1}^{\infty} \lambda^{k}_{i}\varphi_{i}(u)\varphi_{i}(v))du|dv \leq \sum_{i=1}^{\infty} \lambda^{k}_{i}\int_{0}^{1}|\varphi_{i}|(v)dv\sup_{B}\int_{B}|\varphi_{i}|(u)du. 
$

\noindent$\displaystyle
\mbox{So, } \beta_{k} \leq \sum_{i=1}^{\infty} \lambda^{k}_{i}\int_{0}^{1}|\varphi_{i}|(v)dv\sup_{B}\mu^{1/2}(B)\Big(\int_{B}\varphi^{2}_{i}(u)du\Big)^{1/2}.$
Using  $0\leq \mu(B)\leq 1,$ $B\subset[0,1]\cap\mathcal{R}$, where $\mu$ is the Lebesgue measure, we obtain
$\displaystyle
\beta_{k} \leq \sum_{i=1}^{\infty} \lambda^{k}_{i}\int_{0}^{1}|\varphi_{i}|(v)dv.$
Thus, H\"{o}lder's inequality implies $\displaystyle \beta_{k}\leq \sum_{i=1}^{\infty} \lambda^{k}_{i}\Big(\int_{0}^{1}\varphi^{2}_{i}(v)dv\Big)^{1/2}=\sum_{i=1}^{\infty} \lambda^{k}_{i}.$
Therefore, $\beta_{k}\leq M\lambda^{k}_{1}$. So, $\beta_{k}$ converges exponentially to $0$ when $\rho_{1}<1$. Now, if we have geometric ergodicity, then Theorem 4 of Longla and Peligrad (2012) holds. Therefore, the Markov chain is $\rho$-mixing.

\subsection{Theorem \ref{Theo2}}
  Let $f$, $g$ be two functions with $||f||_{2}=||g||_{2}=1, \quad\mathbb{E}(f(X))=\mathbb{E}(g(Y))=0$, where $X$ and $Y$ have uniform distributions on $[0,1]$. We have 
\begin{equation}
2f(x)g(y)=f^{2}(x)+g^{2}(y)-(f(x)-g(y))^{2}. \label{Al}
\end{equation}
Therefore,
$$2\int_{I^2}f(x)g(y)C(dx,dy)=\int_{I^2}f^{2}(x)C(dx,dy)+\int_{I^2}g^{2}(y)C(dx,dy)-\int_{I^2}(f(x)-g(y))^{2}C(dx,dy).$$
Using the fact that $\displaystyle\int_{I}C(dx,dy)=dx$ and $\displaystyle\int_{I}f^{2}(x)dx=\int_{I}g^{2}(x)dx=1$, we obtain
$$ \int_{I^2}f^{2}(x)C(dx,dy)=\int_{I}f^{2}(x)\int_{I}C(dx,dy)=\int_{I}f^{2}(x)dx=1=\int_{I^2}g^{2}(y)C(dx,dy).$$ 
On the other hand, using $c(x,y)\geq \varepsilon_{1}(x)+\varepsilon_{2}(y)$ on a set of Lebesgue measure 1,
$$\int_{I^2}(f(x)-g(y))^{2}C(dx,dy)\geq \int_{I^2}(f(x)-g(y))^{2}(\varepsilon_{1}(x)+\varepsilon_{2}(y))dxdy=$$
$$=\int_{I^2}(f^{2}(x)+g^{2}(y)-2f(x)g(y))(\varepsilon_{1}(x)+\varepsilon_{2}(y))dxdy= I_{a}+I_{b},$$
where
$\displaystyle I_{b}=\int_{I^2}f^{2}(x)\varepsilon_{2}(y)dxdy+\int_{I^2}g^{2}(y)\varepsilon_{2}(y)dxdy-2\int_{I^2}f(x)g(y)\varepsilon_{2}(y)dxdy
$

\noindent and $\displaystyle I_{a}=\int_{I}f^{2}(x)\varepsilon_{1}(x)dx\int_{I}dy+\int_{I}\varepsilon_{1}(x)dx\int_{I}g^{2}(y)dy
-2\int_{I}f(x)\varepsilon_{1}(x)dx\int_{I}g(y)dy.$
The cross terms are equal to zero. Moreover, $\displaystyle\int_{I}g^{2}(x)dx=1$ and $\displaystyle\int_{I}f^{2}(x)\varepsilon_{1}(x)dx\geq 0$. Whence,

\noindent $\displaystyle I_{a}= \int_{I}f^{2}(x)\varepsilon_{1}(x)dx+\int_{I}\varepsilon_{1}(x)dx\geq \int_{I}\varepsilon_{1}(x)dx.$
Similarly, $\displaystyle I_{b}\geq \int_{I}\varepsilon_{2}(y)dy$. 

\noindent Thus, 
$\displaystyle -\int_{I^2}(f(x)-g(y))^{2}C(dx,dy)\leq -\int_{I}\varepsilon_{1}(x)dx-\int_{I}\varepsilon_{2}(y)dy.$
Using (\ref{Al}) and integrating, we obtain
$\displaystyle 2\int_{I^2}f(x)g(y)C(dx,dy)\leq 2-(\int_{I}\varepsilon_{1}(x)dx+\int_{I}\varepsilon_{2}(y)dy).$

\noindent It follows that 
$\displaystyle
corr(f,g)\leq 1-\frac{1}{2}(\int_{I}\varepsilon_{1}(x)dx+\int_{I}\varepsilon_{2}(y)dy). $
Because this holds for all such $f$ and $g$, it also holds for $f$ and $-g$. Thus, 
$\displaystyle
\sup_{f,g}|corr(f,g)|\leq 1-\frac{1}{2}(\int_{I}\varepsilon_{1}(x)dx+\int_{I}\varepsilon_{2}(y)dy).
$ 
Provided that one of the two integrals of the right hand side is non-zero (say $\varepsilon$). It follows that 
$\displaystyle\rho_{1} \leq 1-\frac{1}{2}\varepsilon < 1.
$
If, in addition, the density is positive on a set of Lebesgue measure 1, then the assumptions of Theorem 4 of Longla and Peligrad (2012) hold. Thus, geometric ergodicity follows.

\subsection{Theorem \ref{Theo3}}

Let $X=\varphi(U)$ and $Y=\varphi(V)$, where $(U,V)$ has distribution $C(u,v)$. The common marginal distribution function and probability distribution function of $X$ and $Y$ are
\begin{equation}
\mathbb{P}(X\leq x)=\mathbb{P}(Y\leq x)=1-\varphi^{-1}(x), \quad f_{X}(x)=f_{Y}(x)=\frac{1}{-\varphi^{'}o\varphi^{-1}(x)}.\label{DensX}
\end{equation}
Using this transformation and $\mathbb{P}(U\geq u, V\geq v)=-1+\mathbb{P}(U\geq u)+\mathbb{P}(V\geq v)+\mathbb{P}(U<u,V<v)$ yields the joint cumulative  distribution function of $(X,Y)$
$$\mathbb{P}(X\leq x,Y\leq y)=\mathbb{P}(U\geq \varphi^{-1}(x), V\geq \varphi^{-1}(y))=1-\varphi(x) -\varphi(y) +C(\varphi^{-1}(x), \varphi^{-1}(y)).$$
Differentiating this function gives the joint density
$\displaystyle
h(x,y)=-\frac{\varphi^{''}o\varphi^{-1}(x+y)}{\Big(\varphi^{'}o\varphi^{-1}(x+y)\Big)^{3}}\mathbb{I}\{x+y\leq 1 \}.
$

\noindent$\displaystyle corr(f(U),g(V))=corr(f o \varphi^{-1} (\varphi(U)), g o \varphi^{-1} (\varphi(V)))=corr(\tilde{f}(X), \tilde{g}(Y))$, with $\tilde{f}=fo\varphi^{-1}$ and $\tilde{g}=go\varphi^{-1}$. The function $\varphi^{-1}$ is defined on $[0,1]$ because we are using the standard generator. So, $\varphi^{-1}o\varphi(X)=X$. Therefore, $\rho_{1}(\sigma(X),\sigma(Y))=\rho_{1}(\sigma(U),\sigma(V))$. 

Let $f$ and $g$ be such that $\mathbb{E}(f(X))=\mathbb{E}(g(X))=0$, $Var(f(X))=Var(g(X))=1$. Given the formula of the density, the correlation coefficient between $f(X)$ and $g(Y)$ can be computed by
$\displaystyle corr(f(X),g(Y))=\int_{0}^{1}\int_{0}^{1-x}\frac{\varphi^{''}o\varphi^{-1}(x+y)f(x)g(y)}
{\Big(-\varphi^{'}o\varphi^{-1}(x+y)\Big)^{3}}dydx.$ Therefore, 

\noindent $\displaystyle |corr(f(X),g(Y))|\leq \int_{0}^{1}|f(x)|\int_{0}^{1-x}\frac{\varphi^{''}o\varphi^{-1}(x+y)|g(y)|}
{\Big(-\varphi^{'}o\varphi^{-1}(x+y)\Big)^{3}}dydx.
$
 Recall, that $\varphi^{-1}$ is decreasing because $\varphi$ is decreasing. Thus,  $\displaystyle \forall x,y\in [0,1],  0=\varphi^{-1}(1)\leq \varphi^{-1}(x+y)\leq \varphi^{-1}(x)\leq \varphi^{-1}(0)=1.$ So, 
$\displaystyle\varphi^{'}(0)=\varphi^{'}o\varphi^{-1}(1)\leq \varphi^{'}o\varphi^{-1}(x+y)\leq \varphi^{'}o\varphi^{-1}(x)\leq \varphi^{'}o\varphi^{-1}(0)=\varphi^{'}(1)\leq 0$ because  $\varphi$ is convex. 

\noindent
$\displaystyle 0\leq \frac{1}{\Big(-\varphi^{'}(0)\Big)^3}\leq \frac{1}{\Big(-\varphi^{'}o\varphi^{-1}(x+y)\Big)^3}\leq \frac{1}{\Big(-\varphi^{'}o\varphi^{-1}(x)\Big)^{5/2}\Big(-\varphi^{'}o\varphi^{-1}(y)\Big)^{1/2}}\leq 
\frac{1}{\Big(-\varphi^{'}(1)\Big)^3}.
$
Therefore, using (\ref{DensX}) and $\displaystyle h(x)=\max_{0\leq y\leq 1-x}\varphi^{''}o\varphi^{-1}(x+y)$ leads to

\begin{eqnarray}
|corr(f(U),g(V))|\leq 
\int_{0}^{1}\frac{h(x)|f(x)|}{\Big(-\varphi^{'}o\varphi^{-1}(x)\Big)^{5/2}}\int_{0}^{1-x}\frac{|g(y)|dy}{\Big(-\varphi^{'}o\varphi^{-1}(y)\Big)^{1/2}}dx.
\label{corr1} 
\end{eqnarray}
Using twice H\"{o}lder's inequality in  (\ref{corr1}) yields 

$\displaystyle
|corr(f(U),g(V))|\leq 
\int_{0}^{1}\frac{h(x)|f(x)|(1-x)^{1/2}}{\Big(-\varphi^{'}o\varphi^{-1}(x)\Big)^{5/2}}dx \leq \Big(\int_{0}^{1}\Big(\frac{h(x)}{(-\varphi^{'}o\varphi^{-1}(x))^{2}}\Big)^{2}(1-x)dx\Big)^{1/2}. 
$
Therefore, taking the supremum over all such $f$ and $g$,
$\displaystyle
\rho_{1}^{2}\leq \int_{0}^{1}\Big(\frac{h(x)}{(\varphi^{'}o\varphi^{-1}(x))^{2}}\Big)^{2}(1-x)dx . $

\noindent So, $\rho_{1}< 1$, if $\displaystyle\int_{0}^{1}\Big(\frac{h(x)}{(\varphi^{'}o\varphi^{-1}(x))^{2}}\Big)^{2}(1-x)dx<1 \quad \mbox{or} \quad \int_{0}^{1}h^{2}(x)(1-x)dx<(\varphi^{'}(1))^4.$

\subsection{Theorem \ref{theo4}}
Let $(Y_{n}, n\in\mathbb{N})$ be a Markov chain generated by $C_{a,b}$ and the uniform distribution on $I$. The formula of the $n$-steps transition copula is based on the multiplicative property of the copula families and the recurrence relationship that can be easily established between copulas of $(Y_{0}, Y_{n})$ and $(Y_{0}, Y_{n+1})$. 

For the proof of Doeblin recurrence, we need to show by Theorem 8 of Longla and  Peligrad (2012), that the density c(u,v) of the absolutely continuous part of the copula is bounded away from 0 on a set of Lebesgue measure 1. This follows from the fact that $c(u,v)>(1-a-b)$ for all $u,v \in [0,1]$. Therefore, the Markov chain these copulas generate are $\phi$-mixing for $a+b\neq 1$. This implies geometric ergodicity and exponential $\rho$-mixing by the corollary to Theorem 8 of  Longla and  Peligrad (2012).

Theorem B.1.4 in Chan and Ton (2001) states the following.

\begin{theorem} \label{Lya}
\quad

Let $(Y_{n}, n\in\mathbb{N})$ be an irreducible and aperiodic Markov chain. Suppose there exists a small set $S$, a nonnegative measurable function $L$, which is bounded away from $0$ and $\infty$ on $S$, and constants $r >1$, $\gamma >0$, $K >0$, such that
$\displaystyle r\mathbb{E}(L(X_{n+1})|X_{n}=x) \leq L(x) -\gamma, \quad \mbox{for all}\quad x\in S^{c}, \quad \mbox{and}$

$\displaystyle\int_{S^{c}}L(w)P(x, dw) < K, \quad \mbox{for all} \quad x \in S.$
Then, $(X_{n}, n\in\mathbb{N})$ is geometrically ergodic. 
\end{theorem}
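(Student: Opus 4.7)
My plan is to derive geometric ergodicity from the Foster--Lyapunov drift condition by combining a supermartingale argument with Nummelin's splitting, in the spirit of Meyn--Tweedie. The overall strategy has three phases: first, convert the one-step drift inequality into uniform exponential moments for the return time to the small set $S$; second, exploit the minorization on $S$ to construct regeneration times for a split chain; third, conclude geometric convergence in total variation from exponentially integrable regeneration times.

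I would begin by setting $\tau_S = \inf\{n\geq 1 : X_n \in S\}$ and verifying that $M_n := r^{n}L(X_n)\mathbb{I}\{\tau_S > n\}$ is a nonnegative supermartingale under $\mathbb{P}_x$ for $x\in S^c$, which is immediate from $r\mathbb{E}(L(X_{n+1})\mid X_n=y) \le L(y) - \gamma$ on $S^c$. Optional stopping then yields $\mathbb{E}_x[r^{\tau_S}L(X_{\tau_S})] \le L(x)$ for $x\in S^c$. For $x\in S$, I would condition on the first step, split into $\{X_1 \in S\}$ and $\{X_1 \in S^c\}$, and use $\int_{S^c}L(w)P(x,dw) < K$ together with the boundedness of $L$ on $S$ to propagate the bound to $\sup_{x\in S}\mathbb{E}_x[r^{\tau_S}] < \infty$.

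Next I would apply Nummelin's splitting: because $S$ is small with $P^{n_0}(x,\cdot) \ge q\nu(\cdot)$, I can enlarge the state space so that the split chain possesses an accessible atom $\alpha$. Each visit of the base chain to $S$ then triggers an independent Bernoulli trial with success probability $q$ of entering $\alpha$ within $n_0$ steps; combining geometric trial counting with the exponential moment of $\tau_S$ yields $\mathbb{E}_\alpha[\tilde r^{\tau_\alpha}] < \infty$ for some $\tilde r > 1$. A standard coupling through the atom (Nummelin--Tuominen) then produces the desired bound $\|P^n(x,\cdot) - \pi\|_{TV} \le C(1+L(x))\rho^n$ for some $\rho \in (0,1)$.

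The main obstacle will be the mismatch between the one-step drift and the $n_0$-step minorization: I must verify that the drift continues to control the skeleton chain used in the splitting construction, and that aperiodicity permits geometric ergodicity of the skeleton to transfer to the original chain. Keeping track of how constants depend on $r$, $\gamma$, $K$, $q$, and $n_0$ so that exponential moments survive the splitting is where the real bookkeeping lies; once the regenerative framework is set up, the remaining algebraic estimates are routine.
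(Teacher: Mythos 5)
This theorem is not proved in the paper at all: it is imported verbatim as Theorem B.1.4 of Chan and Tong (2001) and used as a black box in the proof of Theorem \ref{theo4}. Your proposal therefore cannot match ``the paper's proof''; what it does is reconstruct the standard Foster--Lyapunov argument (drift $\Rightarrow$ exponential return-time moments to the small set $\Rightarrow$ Nummelin splitting and regeneration $\Rightarrow$ geometric convergence via coupling/Kendall's theorem), which is essentially the proof one finds in Chan--Tong's appendix, Nummelin, or Meyn--Tweedie Chapter 15. As a strategy it is the right one, and the first phase is the only part you actually execute. Two points deserve attention. First, a small but real indexing slip: with $M_n = r^{n}L(X_n)\mathbb{I}\{\tau_S > n\}$ you have $M_{\tau_S}=0$, so optional stopping applied to this exact process yields only the trivial bound $0\le L(x)$; you need $\mathbb{I}\{\tau_S \ge n\}$ (or to work with $r^{n\wedge\tau_S}L(X_{n\wedge\tau_S})$ and pass to the limit by Fatou), after which $\mathbb{E}_x[r^{\tau_S}L(X_{\tau_S})]\le L(x)$ and the lower bound $\inf_S L>0$ give $\sup_{x\in S^c}\mathbb{E}_x[r^{\tau_S}]\le L(x)/\inf_S L$; note also that you should justify $\tau_S<\infty$ a.s.\ (Harris recurrence, which here comes from irreducibility together with the drift). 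Second, phases two and three are stated as a plan rather than carried out: the transfer of the one-step drift to the $n_0$-skeleton, the use of aperiodicity, and the solidarity argument returning from the skeleton to the original chain are exactly the places where the cited machinery does nontrivial work, and you acknowledge but do not close them. So: correct route, consistent with the literature the paper leans on, but it is an outline with one fixable technical error rather than a complete proof --- which is acceptable here precisely because the paper itself treats the result as quoted background.
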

Here, $L$ is called the Lyapunov function. We shall use this result as follows.
We will use small sets to show that the Markov chain above is geometrically ergodic, then apply Theorem 4 Longla and Peligrad (2012) to obtain $\rho$-mixing.
Assume $a+b\neq 1$. Let $S=[1/2,1]$. For any $A\in\mathcal{R}$, $P(x,A)$ is a sum of three components, one of which is $(1-a-b)\mu(A)$. So,
$\displaystyle P(x,A) \ge (1-a-b)\mu(A) \quad \mbox{for all} \quad x\in S.$
Taking $q=1-a-b$, $\nu=\mu$ and $n=1$, we conclude that $S$ is a small set.

Now, we shall show that the Markov chain generated by this copula is irreducible and aperiodic, and there exists a Lyapunov function.
The density of the absolute continuous part of the copula is $c(u,v)\ge 1-a-b>0$. This density being strictly positive on a set of Lebesgue measure 1, we can conclude by Proposition 2 of M. Longla and M. Peligrad (2012), that the stationary Markov chain it generates is absolutely regular, and thus irreducible and aperiodic.

We shall now return to the existence of the Lyapunov function for geometric ergodicity of the Markov chains generated by these copulas.
Notice that, if $x\in S$, then $1-x\in S^{c}$. Therefore, for any function $L\in \mathbb{L}^{1}(0,1)$, $\displaystyle \mathbb{E}(L(X_{1})|X_{0}=x)=aL(x)+bL(1-x)+(1-a-b)\int_{0}^{1}L(x)dx.$

 Let $L(x)=\mathbb{I}(x\ge 1/2)+2\mathbb{I}(x<1/2).$ Therefore, $\displaystyle\int_{0}^{1}L(x)dx=3/2.$ So,  for $x\in S^{c}$, $L(x)=2$, $L(1-x)=1$ and
$\displaystyle\mathbb{E}(L(X_{1})|X_{0}=x)=2a+b+(1-a-b)(3/2)=2(\frac{a+3}{4})-\frac{b}{2},$
leading to $\frac{4}{a+3}\mathbb{E}(L(X_{1})|X_{0}=x)=L(x)-\frac{b}{a+3}$ for all $x\in S^{c}$. So, $r=\frac{4}{a+3}>1$ and $\gamma=\frac{b}{a+3}>0$. 

On the other hand, $L$ being bounded on $I$, the second condition holds. Therefore, $L$ is a Lyapunov function for the Markov chain generated by this copula. So, by Theorem \ref{Lya}, this stationary Markov chain is geometrically ergodic for $a+b\neq 1$.

 For any convex combination of the two copulas $M$ and $W$ ( corresponding to $a+b=1$), we can compute $\rho_{1}$ as follows. The corresponding transition operator acts on functions in $\mathbb{L}^{2}(0,1)$ via $Qf(u)=a f(u)+(1-a)f(1-u).$
Therefore, if we can find a function $f$ defined on $I$, such that $\mathbb{E}(f)=0$ and $f(1-u)=f(u)$ for all $u$, then $Qf(u)=f(u)$. The existence of such a function leads to $\rho_{1}=1$. The function $f(x)=cos(2\pi x)$ satisfies these assumptions. In conclusion, the Markov chains generated by the copulas are not $\rho$-mixing, and due to symmetry, they are not geometrically ergodic and not Doeblin recurrent. 

\subsection{ Theorem \ref{theo5}}
Consider $T: \mathbb{L}^{2}_{0}(0,1) \rightarrow \mathbb{L}^{2}(0,1)$, 
$\displaystyle T(f)(x)=\int_{0}^{1}{f(y)c(x,y)dy}.
$
We shall use the following claim.
\begin{claim} \label{CL}
 Let $\mathbb{H}$ be a Hilbert space. Let $T$ be a bounded operator defined on $\mathbb{H}$, and $\{e_{n}(x), n\in \mathbb{N}\}$ be an orthonormal basis of $\mathbb{H}$. Then, $\displaystyle
||T||^2 \leq \sum_{n\ge1} ||T(e_{n})||_{2}^2.
$ 

\end{claim}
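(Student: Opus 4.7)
The plan is to exploit the orthonormal expansion of an arbitrary unit vector in $\mathbb{H}$ and then bound $\|T f\|$ by a standard triangle inequality plus Cauchy--Schwarz argument, which is the canonical way to dominate the operator norm by the Hilbert--Schmidt norm.

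First I would fix an arbitrary $f \in \mathbb{H}$ with $\|f\|=1$ and expand it in the given basis as $f = \sum_{n\ge 1} a_n e_n$, where $a_n = \langle f, e_n\rangle$ and, by Parseval, $\sum_{n\ge 1} |a_n|^2 = \|f\|^2 = 1$. Using the continuity of the bounded operator $T$, I would pass $T$ through the (norm-convergent) series to obtain $T f = \sum_{n\ge 1} a_n T(e_n)$.

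Next, I would apply the triangle inequality for norm-convergent series followed by the Cauchy--Schwarz inequality for real sequences:
\[
\|T f\| \;\le\; \sum_{n\ge 1} |a_n|\,\|T(e_n)\| \;\le\; \Bigl(\sum_{n\ge 1} |a_n|^2\Bigr)^{1/2}\Bigl(\sum_{n\ge 1}\|T(e_n)\|_2^2\Bigr)^{1/2} \;=\; \Bigl(\sum_{n\ge 1}\|T(e_n)\|_2^2\Bigr)^{1/2}.
\]
Squaring and taking the supremum over all $f$ with $\|f\|=1$ yields $\|T\|^2 \le \sum_{n\ge 1}\|T(e_n)\|_2^2$, which is the claim.

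The only subtlety, and the one point where I would be slightly careful, is the interchange of $T$ with the infinite series and the use of the triangle inequality for series of vectors; both rely on the fact that $T$ is bounded hence continuous, so the partial sums converge in norm and $T$ commutes with these limits. No further obstacle arises, since the inequality being proved is exactly the well-known bound of the operator norm by the Hilbert--Schmidt norm associated with any orthonormal basis, and the argument above is essentially forced by the structure of the Hilbert space.
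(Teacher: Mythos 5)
Your proof is correct and follows essentially the same route as the paper's: expand $f$ in the orthonormal basis, pass $T$ through the series, and apply the triangle inequality together with Cauchy--Schwarz to bound $\|Tf\|$ by $\bigl(\sum_{n\ge1}\|Te_n\|_2^2\bigr)^{1/2}\|f\|$. Your added care about the norm-convergence justification for interchanging $T$ with the series is a welcome refinement but does not change the argument.
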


\begin{proof}
$\displaystyle f(x)=\sum_{n\ge 1}{a_{n}}e_{n}(x) \quad \mbox{implies}\quad ||f||_{2}=(\sum_{n\ge1}{a^{2}_{n}})^{1/2}.$ Also,  

\noindent $\displaystyle ||Tf||_{2}=||(\sum_{n\ge1}{a_{n}}Te_{n})||_{2} \leq (\sum_{n\ge1}{a^{2}_{n}})^{1/2}(\sum_{n\ge1}||Te_{n}||_{2}^{2})^{1/2}$ implies $\displaystyle \frac{||Tf||_{2}}{||f||_{2}}\leq (\sum_{n\ge1}||Te_{n}||_{2}^{2})^{1/2}$. 
The last inequality uses Cauchy's inequality. This leads to
$\displaystyle ||T||=\sup_{f}\frac{||Tf||_{2}}{||f||_{2}}\leq (\sum_{n\ge1}||Te_{n}||_{2}^{2})^{1/2}$. 
\end{proof}

 It remains to estimate $||Te_{n}||_{2}^{2}$  for the most convenient orthonormal basis of $\mathbb{L}^{2}_{0}(0,1)$, 

\noindent$\displaystyle
\{ e_{n}=\sqrt{2}\sin(2n{\pi}x) ,  \quad b_{n}=\sqrt{2}\cos(2n{\pi}x), \quad n\geq 1\}. 
$
\vskip2mm
{\bf Case 1. $e_{n}=\sqrt{2}\cos(2n{\pi}x)$}. 
$$(1/\sqrt{2})Te_{n}(x)=\int_{0}^{1}c(x,y)\cos(2{\pi}ny)dy=
c(x,y)\frac{\sin(2n{\pi}y)}{2{\pi}n}|_{y=0}^{1}-\frac{1}{2n\pi}\int_{0}^{1}c_{y}(x,y)\sin(2{\pi}ny)dy.$$

Therefore, using $|\sin(2\pi ny)|\leq 1$ and (\ref{cond}), we obtain
$\displaystyle
|(1/\sqrt{2})Te_{n}(x)| \leq \frac{1}{2n\pi}\int_{0}^{1}|c_{y}(x,y)|dy. $ So,
$\displaystyle ||Te_{n}||_{2}^{2}\leq\frac{1}{2({n\pi})^{2}}||\int_{0}^{1}|c_{y}(x,y)|dy||^{2}_{2}.$
In our notations, we obtain
$\displaystyle ||Te_{n}||_{2}^{2}\leq\frac{k_{1}}{2({n\pi})^{2}}.$

\vskip2mm
{\bf Case 2. $e_{n}=\sqrt{2}\sin(2n{\pi}x)$.}
Same as above with the only difference that for this case, the first part is not zero, but $-(\frac{1}{2n\pi})[c(x,1)-c(x,0)]$. 

So,
 $\displaystyle
|(1/\sqrt{2})Te_{n}(x)| \leq \frac{1}{2n\pi}[|c(x,1)-c(x,0)|+\int_{0}^{1}|c_{y}(x,y)|dy]. $
Computing the norms yields
$\displaystyle ||Te_{n}||_{2}^{2}\leq ||\frac{1}{\sqrt{2}n\pi}[|c(x,1)-c(x,0)|+\int_{0}^{1}|c_{y}(x,y)|dy]||^{2}_{2}.$
In our notations,
$\displaystyle
||Te_{n}||_{2}^{2} \leq \frac{k_{2}}{2(n\pi)^2}.
$

\noindent Taking into account both cases, using $\displaystyle \sum_{n>0}\frac{1}{n^{2}}=\frac{\pi^{2}}{6},$  \quad
$\displaystyle
\sum_{n>1} ||T(e_{n})||_{2}^2=\frac{k_{1}+k_{2}}{2(\pi)^{2}}\sum_{i>0}\frac{1}{i^{2}}=\frac{k_{1}+k_{2}}{12}.
$
So, $k_{1}+k_{2} < 12$ implies $\rho_{1}=||T||<1$. Therefore, the generated Markov chains are exponential $\rho$-mixing.
Moreover, if the density is stricly positive on a set of Lebesgue measure 1, then $\beta$-mixing follows from $\rho$-mixing by Theorem 4 of Longla and Peligrad (2012).
\subsection{Theorem \ref{MHMixing}}
To assess the rate of convergence of the mixing coefficient $\beta_{n}$, we use Lemma 2 of Doukhan and al. (1994), that can be stated as follows for the transition kernel at hands. 

\begin{lemma} \label{DMR}
For the algorithm of interest in Theorem \ref{MHMixing}, the following holds.
$$\mathbb{E}_{\pi}(p^{n}(X,a)) \leq \beta_{n}\leq 3\mathbb{E}_{\pi}(p^{[n/2]}(X,a)).$$
\end{lemma}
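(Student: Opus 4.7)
I will handle the two inequalities separately; they reflect complementary features of the kernel $Q(x,A)=p(x,a)\delta_{A}(x)+(1-p(x,a))\nu(A)$, the atomic part at $x$ forcing the lower bound and the regenerative $\nu$-draws driving the upper bound.

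For the lower bound, the plan is to evaluate $\sup_{B}|P^{n}(x,B)-\pi(B)|$ at the single test set $B=\{x\}$. Starting from $X_{0}=x$, the event that the chain rejects the $\nu$-draw at every step $1,\ldots,n$ has probability exactly $p(x,a)^{n}$ (at each such step the chain is still at $x$, so the rejection probability is $p(x,a)$), hence $P^{n}(x,\{x\})\geq p(x,a)^{n}$. Since $\pi(dx)=\tfrac{1}{2}\,dx$ is absolutely continuous, $\pi(\{x\})=0$, and so $\sup_{B}|P^{n}(x,B)-\pi(B)|\geq p(x,a)^{n}$. Integrating against $\pi$ in the definition $\beta_{n}=\int\sup_{B}|P^{n}(x,B)-\pi(B)|\,\pi(dx)$ delivers the left-hand inequality.

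For the upper bound, set $m=[n/2]$. I would couple a chain $X$ started at $x$ with a stationary chain $X'$ (with $X'_{0}\sim\pi$) on a joint probability space that (i) shares the i.i.d.\ sequence of $\nu$-draws used on regeneration events, and (ii) couples the hold/jump coins state-by-state so that two chains at the same state make identical decisions. Whenever the two chains simultaneously decide to jump, they share the next $\nu$-draw and coalesce, remaining equal forever after. The coupling inequality then gives $\|P^{n}(x,\cdot)-\pi\|_{TV}\leq\Pr(\text{chains not coalesced by time }n)$. Decomposing $[1,n]$ into $[1,m]$ and $[m+1,n]$, I would argue that non-coalescence by time $n$ requires one of three events: (a) $X$ makes no jump in $[1,m]$, of probability $p(x,a)^{m}$; (b) $X'$ makes no jump in $[m+1,n]$, of probability at most $\mathbb{E}_{\pi}[p(X,a)^{m}]$; or (c) the two chains' jumps never coincide on a common step, another $\mathbb{E}_{\pi}[p(X,a)^{m}]$-order contribution via the shared $\nu$-sequence. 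Integrating against $\pi$ sums to at most $3\,\mathbb{E}_{\pi}[p(X,a)^{[n/2]}]$.

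The principal obstacle is the rendezvous: regenerations of $X$ and $X'$ are driven by state-dependent coins and so, a priori, occur at different random times, but a shared $\nu$-draw is useful only when both chains decide to jump on the \emph{same} step. The state-by-state coin coupling together with the block decomposition is the device used in Lemma 2 of Doukhan, Massart and Rio (1994), and its careful accounting is what produces the constant $3$ and the exponent $[n/2]$. The bound in this form is exactly what is needed in Theorem \ref{MHMixing}: specializing $p(x,a)=a|x|$ makes $\mathbb{E}_{\pi}[p(X,a)^{[n/2]}]=a^{[n/2]}/([n/2]+1)$, which is geometric in $n$ for $a<1$ and of order $1/n$ at the critical value $a=1$, matching the two regimes asserted in the theorem.
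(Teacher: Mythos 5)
First, a point of reference: the paper does not actually prove this lemma --- it imports it verbatim as Lemma 2 of Doukhan, Massart and Rio (1994), specialized to the kernel $Q(x,A)=p(x,a)\delta_{A}(x)+(1-p(x,a))\nu(A)$, and immediately applies it. So you are attempting to supply an argument the paper only cites. Your lower bound is correct and essentially complete: since $\nu$ is atomless, the chain started at $x$ sits at the atom $\{x\}$ at time $n$ exactly when it has held at every step, so $P^{n}(x,\{x\})=p(x,a)^{n}$, while $\pi(\{x\})=0$; integrating $\sup_{B}|P^{n}(x,B)-\pi(B)|\geq p(x,a)^{n}$ against $\pi$ gives $\beta_{n}\geq\mathbb{E}_{\pi}(p^{n}(X,a))$.

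The upper bound, however, contains a genuine gap, and it sits exactly where you flag ``the principal obstacle.'' In your decomposition, event (c) --- the two chains' jumps never coincide on a common step --- is not a third contributing case alongside (a) and (b): under your coupling it \emph{is} the event of non-coalescence, i.e.\ the very event you set out to bound. The union of (a) and (b) does not cover non-coalescence (for instance, $X$ jumps only at time $2$ and $X'$ jumps only at time $n-1$: neither (a) nor (b) occurs, yet there is no common jump step), so the entire weight of the estimate falls on (c), for which you offer nothing beyond the assertion that it is of order $\mathbb{E}_{\pi}[p^{[n/2]}(X,a)]$. The difficulty is real: with state-dependent coins one chain can jump precisely on the steps where the other holds, and the shared $\nu$-sequence by itself forces no rendezvous. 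Closing the gap requires an actual argument --- for example, conditioning on the first jump time $T$ of the $x$-chain (if $T\leq m$ its subsequent law is a mixture of $\nu Q^{j}$ with $j\geq n-m\geq[n/2]$) and then separately controlling $\sup_{j\geq [n/2]}\|\nu Q^{j}-\pi\|$; this two-stage accounting is what produces both the constant $3$ and the exponent $[n/2]$ in Doukhan, Massart and Rio. As written, your proposal establishes the lower bound and a sensible coupling framework, but not the inequality $\beta_{n}\leq 3\,\mathbb{E}_{\pi}(p^{[n/2]}(X,a))$; if you do not intend to carry out that accounting, the honest course is the paper's own, namely to cite the lemma.
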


Applying  Lemma \ref{DMR} and computing the necessary expected values lead to $$\frac{a^{n+1}}{n+1}\leq \beta_{n}\leq 3\mathbb{E}_{\pi}(p^{[n/2]}(X,a))=\frac{3a^{[n/2]+1}}{[n/2]+1}\leq C\rho^{n}, \quad \rho=\sqrt{a}.$$
Therefore, the Markov chain is exponential $\beta$-mixing when $a<1$. Reversibility implies exponential $\rho$-mixing by Theorem 4 of Longla and Peligrad (2012).  For $a=1$, the generated Markov chain is a $\beta$-mixing with decay rate $1/n$, but fails to be $\rho$-mixing. It fails to be $\rho$-mixing because, by Theorem 4 of Longla and Peligrad (2012), it would have been otherwise geometrically ergodic.

\section{Acknowledgments}
The author thanks his advisor Dr Magda Peligrad for her support in life, for her questions and for her help in the preparation of this paper.

\end{document}